\newtheorem{thm}{Theorem}[section]
\newtheorem{lemma}[thm]{Lemma}
\newtheorem{prop}[thm]{Proposition}
\newtheorem{cor}[thm]{Corollary}
\newtheorem{summ}[thm]{Summary}
\newtheorem{conj}[]{Conjecture}
\theoremstyle{remark}
\newtheorem{remark}[thm]{Remark}
\theoremstyle{definition}
\newtheorem{defi}[thm]{Definition}
\newtheorem{example}[thm]{Example}
\newcommand{\la}{\longrightarrow}
\newcommand{\ha}{\hookrightarrow}
\newcommand{\ov}{\overline}
\newcommand{\Div}{\operatorname{Div}}
\newcommand{\supp}{\operatorname{Supp}}
\newcommand{\Pic}{\operatorname{Pic}}
\newcommand{\Tw}{\operatorname{Tw}}
\newcommand{\Jac}{\operatorname{Jac}}
\newcommand{\Prin}{\operatorname{Prin}}
\newcommand{\mdeg}{\underline{\operatorname{deg}}}
\newcommand{\val}{\operatorname{val}}
\newcommand{\ord}{\operatorname{ord}}
\def\L{\mathcal L}
\def\O{\mathcal O}
\def\X{\mathcal X}
\newcommand{\PP}{\mathbb{P}}
\newcommand{\Z}{\mathbb{Z}}
\newcommand{\N}{\mathbb{N}}
\newcommand{\G}{G}
\def\md{\underline{d}}
\def\mo{\underline{0}}
\def\me{\underline{e}}
\def\mk{\underline{k}}
\def\mt{\underline{t}}
\newcommand{\MaG}{M^{\rm alg}(G)}
\newcommand{\Mgb}{\ov{M_g}}
\newcommand{\ooG}{\overline{G}}
\newcommand{\oov}{\overline{v}}
\newcommand{\ooX}{\overline{X}}
\renewcommand{\div}{\mathrm{div}}
\newcommand{\rmax}{r^{\rm{max}}}
\newcommand{\RX}{\rmax(X,\md)}
\newcommand{\rdel}{r(X,\delta)}
\newcommand{\ralg}{r^{\rm{alg}}}
\newcommand{\rgdel}{\ralg(G,\delta)}
\newcommand{\lm}{G^{\bullet}}
\begin{document}
 \title{Rank of divisors on graphs: an algebro-geometric analysis}
 \author{ Lucia Caporaso}

\address{Dipartimento di Matematica,
Universit\`a Roma Tre,
Largo San Leonardo Murialdo 1,
00146 Roma (Italy)}
\email{caporaso@mat.uniroma3.it}
%\keywords{}
%\subjclass[2000]{14TXX, 05CXX}
%
 \maketitle
\begin{center}

 %\today

\

{\it  Dedicated to Joe Harris, for his sixtieth birthday.}
\end{center}

 \tableofcontents

 \

The goal of this paper is to apply the divisor theory for graphs to  the theory of linear series on singular algebraic curves, and to propose an algebro-geometric interpretation for the rank of divisors on graphs. 
Let us begin with a simple question.

What is the maximum dimension of a linear series of degree
$d\geq 0$ on a smooth projective  curve of genus $g$?

We know what the answer  is. If $d\geq 2g-1$ by Riemann's theorem  every complete linear series of degree $d$ on every smooth curve of genus $g$ has dimension $d-g$. If $d\leq 2g-2$ the situation is more interesting: Clifford's theorem states that the answer is $\lfloor d/2\rfloor$, and the bound is achieved only by certain linear series on hyperelliptic curves; see \cite{ACGH}.

 Now let us look at the combinatorial side of the problem. The  dual graph of any smooth curve of genus $g$ is the (weighted) graph with one vertex of weight equal to $g$ and no edges, let us denote it by $G_g$. This graph admits a unique divisor of degree $d$, whose rank, as we shall see,  is   equal to $d-g$ if  $d\geq 2g-1$, and to $\lfloor d/2\rfloor$ otherwise.

We draw the following conclusion: the maximum dimension of a linear series of degree $d$ on a smooth  curve of genus $g$ equals the rank of the degree $d$ divisor on the dual graph of  the curve.
 In symbols, denoting by $\md$ the unique divisor of degree $d$ on $G_g$ and by $r_{G_g}(\md)$ its rank
 (see below),
\begin{equation}
\label{toy}
 r_{G_g}(\md)=\max\{r(X,D),\  \  \forall X\in M_g,\  \forall D\in \Pic^d(X) \} 
\end{equation}
where $M_g$ is the moduli space  of smooth projective curves of genus $g$. 
 This is quite pleasing for at least two reasons. First, the graph is fixed, whereas the curve varies (in a  moduli space of dimension $3g-3$  if $g\geq 2$); also the divisor on $G_g$ is fixed, whereas $\Pic^d(X)$ has dimension $g$. Second: computing the rank of a divisor on a graph
is simpler than computing the dimension of a linear series on a curve; a computer can do that.

 Therefore we shall now ask how this phenomenon generalizes to singular curves.
For every graph $G$  we have
 a  family, $\MaG$, of curves  having dual graph equal to   $G$.
We want  to
 give an interpretation of the rank of a  divisor  on   $G$
 in terms of linear series on curves in $\MaG$.
 
 This is  quite a delicate issue, as  for such curves we do not have a good control on the dimension of a linear series; in fact, as we shall see, both  Riemann's theorem and   Clifford's theorem fail.
Furthermore,
 asking for the maximal dimension of a linear series of degree $d$   is not so interesting, as the answer easily turns out to be   $+\infty$.
By contrast, the rank of a divisor of degree $d\geq 0$ on a graph is always at most equal to $d$.
In fact, to set-up the problem    precisely we need   a few more details.
Let us assume some of them for now, and continue with this overview.

 For any curve $X$ having $G$ as dual graph, we have an identification of the set of irreducible components of $X$ with the set of vertices, $V(G)$, of $G$,
and  we write
\begin{equation}
\label{Vdec}
X=\cup_{v\in V(G)}C_v.
\end{equation}
 The group of divisors of   $G$ is   the free abelian group, $\Div G$,  generated by $V(G)$.
Hence there is a natural map sending a  Cartier divisor $D$ on $X$ to a divisor on $G$:
 $$
 \Div X \la \Div (G);\  \  \  D \mapsto \sum_{v\in V(G)}(\deg D_{|C_v}) v,
 $$
so that
the divisor of $G$ associated to $D$ is the multidegree of $D$;
 the above map descends to $\Pic (X)\to \Div (G)$, as linearly equivalent divisors  have the same 
 multidegree.
Therefore we can write
\begin{equation}
\label{picdec}
\Pic (X)=\bigsqcup_{\md\in \Div (G)} \Pic^{\md}(X).
\end{equation}
%That is, divisors of $G$ are in
%bijective correspondence with  connected components of $\Pic (X)$.
%(each of which is   a  $g$-dimensional algebraic variety).

On the other hand,  linearly equivalent divisors   on $G$ have the same rank,
so the combinatorial rank is really a function on 
divisor classes.
Let $\delta\in \Pic (G)$ be a divisor class on $G$ and write $r_G(\delta):=r_G(\md)$ for any representative $\md\in \delta$.

How does $r_G(\delta)$ relate to 
$r(X,L)$ as $X$ varies among curves having $G$ as dual graph, and $L\in \Pic(X)$  varies by keeping its   multidegree   class equal to $\delta$?
We conjecture that  the following identity holds:
\begin{equation}
\label{toy1}
r_G(\delta)= \max_{X\in \MaG}\Bigr\{\min_{\md\in\delta}\bigr\{ \  \  \max_{L\in \Pic^{\md}(X)}\{r(X,L)\}\bigl\}\Bigl\}.
\end{equation}
An accurate discussion of  this conjecture is at the beginning of Section~\ref{sec2}.
In  Section~\ref{sec1}, after some combinatorial preliminaries,  a comparative analysis
 of the graph-theoretic and algebraic situation  is carried out highlighting differences and analogies; this also serves as motivation.
In Section~\ref{sec2}  we prove the above identity   in a series of cases,  summarized  precisely at the end of the paper.

The techniques we use  are mostly algebro-geometric, while the combinatorial aspects
are kept at a minimum. The hope is, of course,  that using more sophisticated combinatorial arguments the validity range of  above identity
   could be completely  determined.
 
I am grateful to Margarida Melo and to the referee for some very useful remarks.

 \section{Combinatorial and algebraic rank}
 \label{sec1}

We apply the following conventions throughout the paper.

$X$ is a projective algebraic curve over some algebraically closed field.

$X$ is connected, reduced and has at most nodes as singularities. 

$G$ is a finite connected, vertex weighted graph. 

Capital letters $D,E,\ldots$ are  Cartier divisors on curves.

Underlined  lowercase letters  $\md,\me,\ldots$ are divisors on graphs.

$r(X,D):=h^0(X,D)-1$ is
the   (algebraic) rank of $D$    on   $X$.

$r_G(\md)$ is the  (combinatorial) rank of $\md$ on   $G$.

$\Div(*)$ is the set of divisors on $*$. $\Div_+(*)$  
  the set of effective  divisors.
  
  $\Div^d(*)$ is the set of divisors of degree $d$, for $d\in \Z$.

  $\sim$  \  is the  linear  equivalence on  $\Div^d(*)$.

$\Pic(*):=\Div(*)/\sim$  and $\Pic^d(*):=\Div^d(*)/\sim$.

\subsection{Basic divisor theory on graphs}
\label{bdef}
We begin by reviewing the combinatorial setting following
  \cite{BNRR}  and   \cite{AC}. The basic reference is \cite{BNRR}, which deals with loopless weightless graphs, we use the extension to general weighted graphs given  in \cite{AC}; see \cite{AB} for a different approach.

Let $G$ be a (finite, connected, weighted) graph; we allow loops.
  We write $V(G)$ and $E(G)$ for its vertex set  and edge set;
$G$ is given a   weight function $\omega:V(G)\to \Z_{\geq 0}$.
If $\omega =0$ we say that $G$ is {\it weightless}.
The genus of $G$ is $b_1(G)+\sum_{v\in V(G)}\omega(v)$.
  
We always fix an ordering   $V(G)=\{v_1,\ldots, v_{\gamma}\}$.
The group of divisors of $G$   is   the free abelian group on $V(G)$:
$$
 \Div (G) :=\{\sum _{i=1}^{\gamma}d_i v_i,\  d_i\in \Z \}\cong \Z^{\gamma}.
$$ 
Throughout the paper we identify $ \Div (G)$ with $ \Z^{\gamma}$,
so that divisors on graphs are usually represented by ordered sequences of integers, $\md = (d_1,\ldots, d_{\gamma})$; we write $\md \geq 0$   if $d_i\geq 0$ for every $i=1,\ldots, \gamma$.
 
 We set
$ 
|\md | =\sum_{i=1}^{\gamma}d_i,
$ 
so that    $\Div^d(G)=\{\md\in \Div (G):\  |\md|=d\}$; also $\Div_+(G):=\{\md\in \Div (G):\  \md \geq 0\}$.

For $v\in V(G)$ we denote   by $\md(v)$ the coefficient of $v$ in $\md$, so that  $\md(v_i)=d_i$. 

If $Z\subset V(G)$ we write $\md(Z)=\sum_{v\in Z}\md(v)$ and $\md_Z=(\md(v),\  \forall v\in Z)\in \Z^{|Z|}$. We set    $Z^c=V(G)\smallsetminus Z$.

The local geometry of $G$ can be described by its so-called  intersection product, which we are going to define.
Fix two vertices $v$ and $w$ of $G$; we want to think of   $v$ and $w$ as ``close" in $G$ if they are joined by some edges.
To start with we set,  if $v\neq w$,
$$
(v\cdot w):= 
\text{number of edges joining }v  \text{ and } w.$$
So, the greater $(v\cdot w)$ the closer $v$ and $w$.
Next we set  
\begin{equation}
\label{vv}
(v\cdot v)=-\sum_{w\neq v}(v\cdot w) 
\end{equation}
and the  {\it intersection} product, $\Div (G)\times \Div(G)\to \Z$, is defined as   the  
$\Z$-linear extension of
$(v,w)\mapsto (v\cdot w)$.

Given $Z,W\subset V(G)$, we shall frequently abuse notation by writing
$(W\cdot Z)=\sum_{w\in W, z\in Z}(w\cdot z)$.
 Notice that if $v\not\in W$ the quantity $(v\cdot W)$ is the number of edges joining $v$ with a vertex of $W$, whereas if $v\in W$ we have $(v\cdot W)\leq 0$

We are going to study functions on $G$, and their divisors.
A   rational function  $f$ on $G$ is a map
$ 
f:V(G)\to \Z.
$  
To define the associated divisor, $\div (f)$, we proceed   in analogy with   classical geometry.
We begin by requiring that if $f$ is constant  its divisor be equal to $0$.
The set of rational functions on $G$ is a group under addition;
so  
 we require that if $c:V(G)\to \Z$ is constant  then 
$ 
\div(f+c)=\div (f).
$ 
Now we need to study  the analogue of zeroes  and  poles, i.e. the local behaviour of a function near each $v\in V(G)$.
We write 
$$
\div (f):=\sum_{v\in V(G)}\ord_v(f) v
$$
where $\ord_v(f) \in \Z$ needs to be defined so as to  depend on the behaviour of $f$ near $v$, that is on the value of $f$ at each $w$ close to $v$, and on how close   $v$ and $w$ are.
We are also requiring  that
$\ord_v(f)$ 
  be invariant under 
 adding a constant 
 to $f$,
this suggests that $\ord_v(f)$ be a function of the difference $f(v)-f(w)$,
 proportional to $(v\cdot w)$. That was  an intuitive
motivation for the following definition 
\begin{equation}
\label{ord}
\ord_v(f):=\sum_{w\neq v}(f(v)-f(w))(v\cdot w).
\end{equation}
Loosely speaking,
  $\ord_v(f)=0$ means $f$ is  locally constant  at $v$, and
   $\ord_v(f)>0$
(resp.   $\ord_v(f)<0$),
means $v$ is a   local maximum  for $f$ (resp. a  local minimum).
 
Notice the following useful simple fact.
\begin{remark}
\label{cnerlm}
Let $Z\subset V(G)$ be the set of vertices where the function $f$ takes its minimum value. Then $\div(f)(Z)\leq - (Z\cdot Z^c)$ and for every $v\in Z$
we have $\div(f)(v)\leq 0$.
\end{remark}

Note that $\ord_v(f)=-\ord_{-f}(v)$ and $\ord_v(f)+ \ord_v(g)= \ord_{v}(f+g)$.
The divisors of the form $\div (f)$ are called {\it principal},
and are easily  seen to have degree zero. Thus they form a subgroup of $\Div^0(G)$,
denoted by $\Prin(G)$.

Two divisors  $\md,\md'\in \Div(G)$  are  linearly  equivalent,   written $\md \sim  \md'$,  if
$ 
\md-\md' \in \Prin (G).
$ 
We write $\Pic(G)=\Div(G)/\sim $;
we usually denote an element of $\Pic(G)$ by  $\delta$ and write $\md\in \delta$ for a representative; we also  write $\delta =[\md]$.
Now, $\md\sim \md'$ implies $|\md|=|\md'|$ hence we set
$$
\Pic^d(G)
=\Div^d(G)/\sim 
$$
(often in the graph-theory literature the notation $\Jac(G)$ is used for what we here denote by $\Pic(G)$  to stress the analogy with algebraic geometry).
 
 The group $\Pic^0(G)$ appears in several different places of the mathematical literature, with various names and notations; see for example \cite {BdlHN}, 
 \cite{OS}, \cite{raynaud}.
 
 It is   well known  that $\Pic^d(G)$  is a finite set whose cardinality 
 equals the complexity, i.e.    the number of spanning trees, of the graph $G$.

\begin{remark}
The intersection product does not depend on the   loops or the weights of $G$, hence the same holds for $\Prin(G)$ and $\Pic(G)$.
\end{remark}

To define the combinatorial rank we proceed in two steps, treating loopless, weightless graphs first.

Let $G$ be a loopless, weightless graph, and $\md\in \Div(G)$.
Following \cite{BNRR}, we  define the {\it (combinatorial)  rank} of $\md$   as follows 
 \begin{equation}\label{trasl}
r_G(\md)=\max\{k:  \forall  \me\in \Div^k_+(G)  \  \ \exists  \md'\sim  \md \text{ such that } \md'-\me\geq 0\}
\end{equation}
with $r_{G}(\md)=-1$ if the set on the right is empty. 

\

The combinatorial rank   defined in \eqref{trasl} satisfies a Riemann-Roch formula (see below)
if the graph  is free from loops and weights, but not in general. 
This is why a different definition is needed 
for weighted graphs admitting loops.
To do that 
we introduce the weightless, loopless graph $\lm$ 
 obtained from $G$ 
 by 
 first attaching $\omega(v)$ loops based at $v$ for every $v\in V(G)$, and then
 by
 inserting a vertex
in every loop edge. 
This graph  $\lm$  (obviously free from loops)   is assigned the zero
weight function.  Now  $G$ and $\lm$ have the same genus.

As $V(G)\subset V(\lm)$ we have a natural injection
$ 
\iota:\Div(G)\ha \Div (\lm).
$ 
%which identifies $\Div (G)$ as the subgroup of divisors on $\lm$ having  coefficient
%equal to zero on all the vertices in $V(\lm)\smallsetminus V(G)$.
It is easy to see that $\iota(\Prin(G))\subset \Prin(\lm)$, hence we have  
\begin{equation}
\label{inj.}
\Pic(G)\ha \Pic(\lm).
\end{equation}
We   define the rank for a divisor $\md$ on any graph $G$ as follows:
\begin{equation}
\label{rankwl}
r_G(\md ):=r_{\lm}(\iota(\md)) 
\end{equation}
where the right-hand-side is defined in \eqref{trasl}.

\begin{remark}
\label{lininv}
If $\md\sim \md'$ we have $r_G(\md)=r_G(\md')$.
\end{remark} 
\begin{example}
The picture below represents $G^{\bullet}$ for a graph having one vertex of weight 1 and one  loop based at a vertex of weight zero. 
We have $\Pic^0(G)=0$ and it is easy to check that $\Pic^0(G^{\bullet})\cong\Z/2\Z\oplus \Z/2\Z$.
Consider the divisor $v\in \Div(G)$; then 
$r_G(v)= 0$.
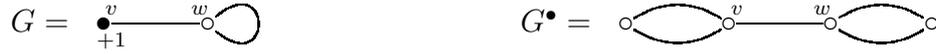
\begin{figure}[h]
\begin{equation*}
\xymatrix@=.5pc{
&&&&&&&&&&&&&&\\
G = &*{\bullet} \ar@{-}[rrr]_<{+1}^<{v}^>{w}&&
&*{\circ}\ar@{-}@(ur,dr)&&
&&&&&&&  
G^{\bullet} = &*{\circ}\ar@{-} @/^.6pc/[rrr]\ar@{-} @/_.6pc/[rrr]&&&*{\circ}\ar@{-}[rrr]^<{v}^>{w}&&
&*{\circ}\ar@{-} @/^.6pc/[rrr]\ar@{-} @/_.6pc/[rrr]&&&*{\circ}&&
&&&&\\
 &&&&&&&&&}
\end{equation*}
\caption{Weightless loopless model of $G$}
\end{figure}

In our  figures, weight-zero vertices are represented by a ``$\circ$".
\end{example}
It is clear that different graphs may have the same  $G^{\bullet}$, see for example the
picture  in  the proof of \ref{g2thm}.
Other examples will be given in the sequel, also during some proofs.

  \subsection{Simple comparisons}
  \label{comp}
  As is well known, the   combinatorial rank   is the analogue of the   rank for a divisor on a smooth curve, in the following sense.
If $X$ is      smooth   and $D$ is a divisor on it 
we have
\begin{eqnarray*}
&r(X,D)=h^0(X,D)-1=\;\;\\
%&\max\{k: \forall E\in \Div^k_+X\ \  \  \exists    D'\sim D, \text{ such that }  D'-E\geq 0\}
&=\max\{k: \forall p_1,\ldots, p_k\in X\  \  \exists D'\sim D:   \    D' -p_i\geq 0\   \  \forall i=1\ldots k\}. 
 \end{eqnarray*}
 Now, if 
  $X$ is singular the above identity may fail, as the next example shows.
  First, recall that two Cartier divisors, $D$ and $D'$, on   $X$ are defined to be linearly equivalent,
  in symbols $D\sim D'$,
  if the corresponding line bundles, or invertible sheaves, $\O_X(D)$ and $\O_X(D')$,
  are isomorphic.
  \begin{example}
 Let $X=C_1\cup C_2$ be the union of two smooth rational curves meeting at a point 
(a node of $X$). Let $q\in C_2$ be a smooth point of $X$; then  $r(X,q)=1$
(see the next remark). Now, for any smooth point  $p$ of $X$ lying on $C_1$   we have  $q\not\sim p$  
(these two divisors   have different multidegree).
\end{example} 
We will use the  following 
simple  facts. 

\begin{remark}
\label{rkin}
Let
$X=Z\cup Y$ with  $Z$ and $Y$ connected subcurves with no common components,
set $k:=|Z\cap Y|$. Pick
 $L\in \Pic X$, then:
\begin{enumerate}
\item $r(Z,L_Z)+r(Y,L_Y) -k+1\leq r(X,L)\leq r(Z,L_Z)+r(Y,L_Y)+1.$
%We have
%$$\  \  r(Z,L_Z)+r(Y,L_Y) -k+1\leq r(X,L)\leq r(Z,L_Z)+r(Y,L_Y)+1,$$
\item
 If $k=1$ we have  $r(X,L)= r(Z,L_Z)+r(Y,L_Y)+1$ if and only if
 $L_Z$ and $L_Y$ have a base point at the branch over   $Z\cap Y$.
 \item
%\label{rkvan}
%If $r(Z,L_Z)=-1$ (e.g. 
If 
$\mdeg L_Z<0$ we have
$ 
 r(X,L)=  r(Y,L_Y(-Y\cdot Z)),
$  where $Y\cdot Z$ denotes the degree-$k$  divisor cut by $Z$ on $Y$.
\end{enumerate}
\end{remark}

Let $X$ be a nodal connected curve and $G$ its {\it dual graph}. Recall that $G$ is 
defined so that
the set of its vertices  is identified with the set of irreducible components of $X$ (we always use notation \eqref{Vdec}),
 the set of its edges is identified with the set of nodes of $X$, and for $v,w\in V(G)$
we have
$(v\cdot w)=|C_v\cap C_w|$.
The weight  function on $G$ assigns to the vertex $v$ the genus of the desingularization of the corresponding component, $C_v$.
The arithmetic genus of $X$ is equal to the genus of its dual graph.

 \
 
The  divisor  theory of $G$ is best connected to the divisor theory of $X$
by adding to the picture variational elements, i.e. by considering one-parameter
families of curves specializing to $X$, as follows.

Let $\phi:\X \to B$ be  a    regular one-parameter smoothing  of a curve $X$. That is,
$B$ is a smooth connected one-dimensional variety 
with a  marked point $b_0\in B$ , $\X$ is a   regular surface,
and $\phi^{-1}(b_0)\cong X$ while $\phi^{-1}(b)$ is  a smooth curve for every $b\neq b_0$.
Such a  $\phi$ determines a discrete subgroup  $\Tw_{\phi}X$ of $\Pic^0(X)$:
\begin{equation}
\Tw_{\phi}X:=\{\O_{\X}(D)_{|X},\  \forall D\in \Div (\X): \supp D\subset X\}/\cong.
\end{equation}
 Elements of $\Tw_{\phi}X$ are called  twisters.
The multidegree map 
$$
\mdeg :\Tw_{\phi}X \la \Z^{\gamma}=\Div(G)
$$
has image, independent of $\phi$, written
$$
\Lambda_X=\mdeg \ (  \Tw_{\phi}X ) \subset\Div^0(G).
$$
We now connect   with the divisor theory of $G$.
Write $X=\cup_{v_i\in V(G)}C_{v_i}$;
it is obvious that $\Lambda_X$ is generated by
 $\mdeg\ \O(C_{v_i})$ for   $i=1,\ldots,\gamma$.
On the other hand   we clearly have
$$
\mdeg\ \O(C_{v_i})=((v_1\cdot v_i),\ldots, (v_{\gamma}\cdot v_i))=-\div f_i 
$$
where $f_i:V(G)\to \Z$ is the function
taking value $+1$ at $v_i$ and zero elsewhere.
Therefore 
  $\mdeg\ \O(C_{v_i})\in \Prin (G)$. Finally, as the  set    $\{\div(f_i),\  \  i=1,\ldots\gamma\}$ generates $\Prin (G)$, we obtain
$$\Lambda_X =
 \Prin(G).$$ 
For $v\in V(G)$ we shall denote
 \begin{equation}
\label{mt}
\mt_v:=\mdeg\ \O(C_{v})= ((v_1\cdot v),\ldots, (v_{\gamma}\cdot v))\in \Prin(G).
\end{equation}
By \eqref{vv} any $\gamma-1$ elements of type $\mt_v$  generate $\Prin(G)$.

\

 We denote by $q_{\phi}: \Pic (X)\to   \Pic (X)/ \Tw_{\phi}X$ the quotient map. Summarizing, we have a commutative diagram
  \begin{equation}\label{2graphs}
\xymatrix{
  \Div (X) \ar[r] & \Pic (X) \ar[r]^{\mdeg} \ar[d]_{q_{\phi}}
&  \Div (G)   \ar[d]^{q_{G}} \\
&  \Pic (X)/ \Tw_{\phi}X \ar[r] 
%
%\frac{ \Pic (X)}{Tw_fX}
  &   \Div (G)/\Prin G =\Pic(G). \\
}
\end{equation}
We are going to use the diagram to compare the combinatorial rank $r_{G}(\md)$ to
the algebraic rank $r(X,L)$, where $L$ is a line bundle  on $X$.
The next statement summarizes a series of well known facts by highlighting opposite behaviours.
 
 \begin{prop} [Differences in combinatorial and algebraic setting] 
  \label{rdiff} 
  
  \
  
  \noindent
  Let $X$ be a reducible curve and  $G$ its dual graph. 
 \begin{enumerate}[{\bf (1)}]
  \item
  \label{rd}
  \begin{enumerate}[(a)]
\item For every $d\in \Z$ and  $\md\in  \Div ^d(G)$  we have $r_{G}(\md)\leq \max\{-1,d\}$.

 \item For every 
 $d,n\in \Z$   there exist infinitely many $\md$ with $|\md|=d$ such that
   $r(X,L)> n$ for every  $ L\in \Pic^{\md}(X)$.
 \end{enumerate}

 \item
  \label{rq}
  \begin{enumerate}[(a)]
\item 
For any   $\md, \md' \in  \Div (G)$ with   $\md\sim \md'$
  (i.e.  $q_{G}(\md)=q_{G}(\md')$)
we have $r_{G}(\md)=r_{G}(\md')$.
 \item For every  regular one-parameter smoothing $\phi$ of $X$   there exist infinitely many $L,L'\in  \Pic (X)$ with
  $q_{\phi}(L)=q_{\phi}(L')$
and  $r(X,L)\neq r(X,L')$.
 \end{enumerate}
 \item
 \label{r+}
\begin{enumerate}[(a)]
 \item  \cite[Lemma 2.1]{BNRR} 
 For any $\md, \md' \in  \Div (G)$ with $r_G(\md) \geq 0$ and $r_G(\md')\geq 0$  we have 
 $$r_{G}(\md)+r_{G}(\md')\leq r_{G}(\md +\md').$$
  \item 
  There exist    infinitely many $L,L'\in  \Pic (X)$ with $r(X,L) \geq 0$ and $r(X,L')\geq 0$ such that 
 $$r (X,L)+r (X,L')> r (X,L\otimes L').$$
 \end{enumerate}
  \item
   \label{rcl}  
\begin{enumerate}[(a)]
 \item \cite[Cor. 3.5]{BNRR}(Clifford for graphs)
For any  $0\leq d\leq 2g-2$   and  any   $\md\in  \Div^d(G)$ we have
 $$r_{G}(\md)\leq d/2.$$
\item 
For any  $0\leq d\leq 2g-2$ there exist  infinitely many $\md$ with $|\md |=d$
such that for any $L\in \Pic^{\md}(X)$
$$r(X,L)>d/2.$$
 \end{enumerate}
\end{enumerate}
\end{prop}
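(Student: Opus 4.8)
The plan is to separate, in each of the four items, the combinatorial assertion (a) from the algebraic one (b). The (a) parts need essentially nothing: (1)(a) follows because $\iota$ preserves degree, so on $\lm$ no effective divisor is linearly equivalent to $\iota(\md)$ when $|\md|<0$ (whence $r_G(\md)=-1$), while for $|\md|=d\geq0$ plugging an effective $\me$ of degree $d+1$ into \eqref{trasl} gives $r_G(\md)\leq d$; assertion (2)(a) is Remark \ref{lininv}; and (3)(a), (4)(a) are precisely the results quoted from \cite{BNRR} in the statement. All of the content therefore lies in the (b) parts, and I would base these on two elementary estimates for $h^0$ of a line bundle on the reducible curve $X$.

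The first is a \emph{concentration estimate}. Fixing a component $C_v$ and writing $Y=\overline{X\smallsetminus C_v}$, $D_v=C_v\cap Y$, the sections of $L$ that vanish on $Y$ form the subspace $H^0(C_v,L|_{C_v}(-D_v))\subset H^0(X,L)$, so Riemann's inequality on the integral curve $C_v$ gives $r(X,L)\geq\deg_v L-\#D_v-p_a(C_v)$. The decisive feature is that this bound depends only on $\deg_v L$, hence holds uniformly for \emph{every} $L\in\Pic^{\md}(X)$. The second is a \emph{vanishing estimate}: if every component-degree of $L$ is negative, each section restricts to zero on each integral component, so $h^0(X,L)=0$ and $r(X,L)=-1$.

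Granting these, the constructions are short. For (1)(b) take $\md$ with $\md(v)=m$ and $\md(v')=d-m$ at a fixed component $C_v$ and a neighbour (zero elsewhere); the concentration estimate gives $r(X,L)\geq m-\#D_v-p_a(C_v)>n$ for every $L\in\Pic^{\md}(X)$ once $m\gg0$, and distinct $m$ give infinitely many $\md$. Part (4)(b) is the identical construction with $d$ chosen in $[0,2g-2]$, since the same lower bound exceeds $d/2$ for $m\gg0$. For (2)(b), fix a vertex $v_0$ with neighbour $v_1$ and set $L'=L\otimes\O_\X(nC_{v_0})|_X$; this twister lies in $\Tw_\phi X$, so $q_\phi(L')=q_\phi(L)$, while $\deg_{v_1}L'=\deg_{v_1}L+n(v_1\cdot v_0)\to\infty$ forces $r(X,L')\to\infty$ by the concentration estimate, so $r(X,L')\neq r(X,L)$ for all large $n$. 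For (3)(b) I would balance the two estimates: choose distinct components $C_1,C_2$, let $L$ be positive of large enough degree on $C_1$ (so $r(X,L)\geq0$) and very negative on all other components, and symmetrically let $L'$ be positive on $C_2$ and very negative elsewhere; taking the negative degrees large makes every component-degree of $L\otimes L'$ negative, so the vanishing estimate yields $r(X,L\otimes L')=-1<r(X,L)+r(X,L')$, with infinitely many pairs as the negative degrees grow.

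The point demanding the most care — and the real obstacle — is the uniformity over the whole Picard fibre $\Pic^{\md}(X)$ required by (1)(b) and (4)(b): the estimate on $r(X,L)$ must hold for \emph{every} line bundle of the prescribed multidegree, not just a convenient representative. This is exactly why the argument must rest on the concentration estimate, whose right-hand side sees only $\deg_v L$; a bound extracted from the finer geometry of a single $L$, such as the base-point refinement of Remark \ref{rkin}(2), would not survive passage across the fibre. A secondary difficulty is the bookkeeping in (3)(b) for an arbitrary reducible $X$, where one must simultaneously keep $r(X,L)$ and $r(X,L')$ nonnegative while driving every component-degree of the product negative; since the two estimates pull in opposite directions, the numerical choices must be made with some care when $X$ has many components or positive-genus pieces.
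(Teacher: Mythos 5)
Your proposal is correct, and for three of the four algebraic items it is essentially the paper's own argument made explicit: the paper also proves (1)(b), (2)(b) and (4)(b) in one stroke by concentrating arbitrarily large degree on a single component, using the twister $\md'=\md-\mdeg\,\O_X((m+d_1)C_1)$ (which is what handles (2)(b)), and then simply asserts that ``it is clear'' that every $L'\in\Pic^{\md'}(X)$ has rank at least $n$; your concentration estimate $r(X,L)\geq \deg_vL-|D_v|-p_a(C_v)$, coming from sections vanishing on the complementary subcurve, is precisely the uniform-over-the-fibre bound that justifies that step, so your emphasis on uniformity is well placed but not a departure. The genuine divergence is in (3)(b). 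The paper fixes one component $C$, takes $L$ of large degree on $C$ and very negative on $Z=\overline{X\smallsetminus C}$, takes $L'=\O_X(E)$ with $E$ effective, supported on $Z$ and $r(X,L')\geq 1$, and then computes via Remark \ref{rkin}(3) that $r(X,L\otimes L')=r(C,L(-C\cdot Z))=r(X,L)$, so superadditivity fails because tensoring by $L'$ leaves the rank unchanged. You instead concentrate $L$ and $L'$ on two \emph{different} components and arrange all component-degrees of $L\otimes L'$ to be negative, so your vanishing estimate gives $r(X,L\otimes L')=-1<0\leq r(X,L)+r(X,L')$. Both arguments are sound, and the numerical bookkeeping you flag as delicate does close up (one only needs $\deg_{C_1}L\geq |D_1|+p_a(C_1)$, $\deg_{C_2}L'\geq |D_2|+p_a(C_2)$, and the negative degrees on the opposite factor chosen larger than these fixed quantities). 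Your version is more symmetric and rests only on the two elementary estimates, avoiding both Remark \ref{rkin}(3) and the auxiliary divisor $E$ of positive rank; the paper's version buys the slightly finer conclusion that failure of superadditivity already occurs with $r(X,L\otimes L')=r(X,L)\geq 0$, i.e.\ without the product dropping to rank $-1$.
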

\begin{remark}
In \cite{BNRR} the authors work with loopless, weightless graphs, but it is clear that the two above results extend, using definition \eqref{rankwl}.
\end{remark}
\begin{proof}
Part (\ref{rd}). The assertion concerning $r_{G}$ follows immediately from the   definition.
The second part  follows from the next observation.
 
Let     $\md=(d_1,\ldots, d_{\gamma})$ be any multidegree on $X$.
For any integer $m$ we   pick  $\md '=(d_1',\ldots, d_{\gamma}')\sim  \md$ such that $d_1'\geq m$
(for example $\md'=\md - \mdeg \ \O_X((m+d_1)C_1)$). 
It is clear that for any $n\in \N$ we can choose      $m$ large enough so that for every $L'\in \Pic^{\md '}(X)$
 we have $r(X,L')\geq n$.
In particular, for every $L\in \Pic ^{\md}(X)$, any regular smoothing $\phi$ of $X$, 
 there exists $L'\in  \Pic (X)$ such that 
$q_{\phi}(L)=q_{\phi}(L')$ and  $r(X,L')\geq n. $ 
From this argument we derive item (b) for parts \eqref{rd}, \eqref{rq} and \eqref{rcl}.
 
%In part (\ref{rq}), the statement about $r_{G}$ follows immediately from \eqref{trasl}.
 
It remains to prove item (b) of part (\ref{r+}).
Fix an irreducible component   $C$ of $X$ and
set $Z=\overline{X\smallsetminus C}$.
Pick any effective  Cartier divisor $E$ on $X$ with
$ 
\supp E\subset  Z
$ 
and such that, setting $L'= \O_X(E)$, we have 
\begin{equation}
\label{LE}
r(X,L')\geq 1.
\end{equation}
Now pick $m\geq 2g_C+k$ where  $g_C$ is the arithmetic genus of $C$ and 
  $k= |C\cap Z|$. Let $\md$ be a multidegree with
  $d_C=m$ and such that
    $$
  \md_Z+\mdeg_Z\O_X(E) < 0.
  $$
In particular $\md_Z<0$, hence for every   $L\in \Pic^{\md}X$ we have 
$$
r(X,L)=r(C,L(-C\cdot Z))=m-k-g_C\geq g_C\geq 0 
$$  
(writing $C\cdot Z$ for the divisor cut on $C$ by $Z$; see Remark~\ref{rkin}).
Now   consider 
$L\otimes L'=L(E)$. We have
$ 
\mdeg_ZL(E)= \md_Z+\mdeg_Z\O_X(E) < 0
$ 
hence 
$$
r(X,L\otimes L')=r(C,L(E -C\cdot Z))=r(C,L(-C\cdot Z))=r(X,L). 
$$  
By \eqref{LE}, we have $r(X,L\otimes L')<r(X,L)+r(X,L')$ and are done.
 \end{proof}
 
We now mention, parenthetically but using the same set-up,   a different type of result on the interplay between algebraic geometry and graph theory, when families of curves are involved.  
This is the Specialization Lemma of \cite{bakersp}, concerning
a regular one-parameter smoothing 
$\phi:\X\to B$  of a curve $X$ as before
(so that $X$ is   the fiber over $b_0\in B$). This lemma states that if
$\L$ is a line bundle on the total space $\X$ then, up to shrinking $B$ near $b_0$,  for every $b\in B\smallsetminus \{b_0\}$  
  the algebro-geometric rank of the restriction of $\L$ to the fiber over $b$
is at most equal to the combinatorial rank of the multidegree
of the restriction of $\L$ to $X$. In symbols, for all $b\neq b_0$, we have
$ 
r(\phi^{-1}(b), \L_{|\phi^{-1}(b)})\leq r_{G}(\mdeg \ \L_{|X}).
$ 
 (This form is actually a   generalization of the one proved in \cite{bakersp}; see    \cite{AB} and \cite{AC}.)
 Apart from being interesting in its own right, the Specialization Lemma has some remarkable applications, like a new proof of the classical Brill-Noether theorem (see \cite{ACGH}) given in \cite{CDPR}. We view this as yet another motivation to study the algebro-geometric meaning of the combinatorial rank.
 
 \
 
A fundamental analogy between the algebraic and combinatorial setting is the   Riemann-Roch formula, which holds for every nodal curve $X$ and every graph $G$.
The algebraic case is   classical:  
let  $K_X\in \Pic (X)$ be the dualizing line bundle (equal to the canonical bundle if $X$ is smooth), then
  for any Cartier divisor $D$ on $X$ we have $$
r(X,D)-r(X,K_X(-D))=\deg D -g +1
$$
where $g$ is the arithmetic genus of $X$. 

The same formula holds for graphs.
To state it,  we  
introduce   the canonical divisor, $\mk_{G}$, of  a graph $G$:
\begin{equation}
\label{candef}
\mk_{G}:=\sum_{v\in V(G)}\bigr(2\omega(v)-2+\val(v)\bigl)v
%(2\omega(v_1)-2+\val(v_1),\ldots, 2\omega(v_{\gamma})-2+\val(v_{\gamma})) 
\end{equation}
where $\val(v)$ is the valency of $v$.
If $G$ is the dual graph of   $X$ we have 
\begin{equation}
\label{canGX}
\mk_{G}=\mdeg \  K_X.
\end{equation}
\begin{thm}
[Riemann-Roch formula for graphs]
Let  $G$ be a graph of genus $g$; for every       $\md\in \Div^d(G)$
 we have
 $$r_{G}(\md)-r_{G}(\mk_{G}-\md)=d-g+1.
$$
\end{thm}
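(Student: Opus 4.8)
The plan is to reduce the weighted statement to the Baker--Norine Riemann--Roch theorem for loopless, weightless graphs, which is already available in \cite{BNRR}. By the very definition \eqref{rankwl} of the rank on a general graph, $r_G(\md)=r_{\lm}(\iota(\md))$, where $\lm$ is the associated loopless, weightless graph and $\iota:\Div(G)\hookrightarrow\Div(\lm)$ the natural inclusion of divisor groups; so I would transport the whole formula along $\iota$ and resolve it on $\lm$.

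First I would record three compatibilities under $\iota$. The degree is preserved, since $\iota$ keeps the coefficients on $V(G)\subset V(\lm)$ and puts zero on the inserted vertices, so $|\iota(\md)|=|\md|=d$. The genus is preserved, $g(\lm)=g$, as already noted in the text: subdividing an edge leaves $b_1$ unchanged, while attaching $\omega(v)$ loops raises $b_1(G)$ by exactly $\sum_{v}\omega(v)$, compensating the loss of the weights. The third and decisive compatibility is that the canonical divisor descends correctly, namely
$$\iota(\mk_G)=\mk_{\lm}.$$
Granting these, the proof closes at once: applying \eqref{rankwl} to both $\md$ and $\mk_G-\md$ and using the canonical identity gives
$$r_G(\md)-r_G(\mk_G-\md)=r_{\lm}\bigl(\iota(\md)\bigr)-r_{\lm}\bigl(\mk_{\lm}-\iota(\md)\bigr),$$
and since $\lm$ is loopless and weightless of genus $g$, the right-hand side equals $|\iota(\md)|-g+1=d-g+1$ by the Baker--Norine theorem in \cite{BNRR}.

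The hard part will be the canonical identity $\iota(\mk_G)=\mk_{\lm}$, which is a valency computation on $\lm$. For an original vertex $v\in V(G)$, forming $\lm$ adds $\omega(v)$ loops at $v$ and then subdivides every loop (old and new) by a midpoint; each such loop, subdivided or not, still contributes $2$ to the valency of $v$, so $\val_{\lm}(v)=\val(v)+2\omega(v)$. As $\lm$ carries the zero weight, the coefficient of $v$ in $\mk_{\lm}$ is $\val_{\lm}(v)-2=2\omega(v)-2+\val(v)$, which is precisely the coefficient of $v$ in $\mk_G$ given by \eqref{candef}. Each inserted midpoint $u$ has valency $2$, hence contributes $\val_{\lm}(u)-2=0$ to $\mk_{\lm}$. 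Thus $\mk_{\lm}$ is supported on $V(G)$ with the same coefficients as $\mk_G$, i.e.\ $\iota(\mk_G)=\mk_{\lm}$, and the reduction is complete with no combinatorial input beyond \cite{BNRR}.
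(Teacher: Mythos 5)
Your proposal is correct, and it is worth noting that it does strictly more than the paper itself, which offers no argument for this theorem: the text simply cites \cite[Thm 1.12]{BNRR} for loopless, weightless graphs and defers the extension to general weighted graphs to \cite{AC}. What you have written is precisely the content of that deferred extension, derived from the paper's own definition \eqref{rankwl} plus the Baker--Norine theorem. Your three compatibilities all check out: $\iota$ preserves degree by construction; $g(\lm)=g$ because attaching the $\omega(v)$ loops raises $b_1$ by exactly $\sum_v \omega(v)$ while the weights drop to zero, and subdivision does not change $b_1$; and the decisive identity $\iota(\mk_G)=\mk_{\lm}$ follows from the valency count $\val_{\lm}(v)=\val(v)+2\omega(v)$ together with the vanishing of $\mk_{\lm}$ at the $2$-valent inserted vertices. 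One small point you rely on tacitly but correctly: loops must count twice in $\val(v)$ in \eqref{candef} --- this is forced by the requirement that $\mk_G$ have degree $2g-2$ and by the identification \eqref{canGX} with $\mdeg K_X$ --- and your computation uses exactly that convention. Since $\iota$ is a group homomorphism, $r_G(\mk_G-\md)=r_{\lm}\bigl(\mk_{\lm}-\iota(\md)\bigr)$, and applying \cite{BNRR} to the connected, loopless, weightless multigraph $\lm$ closes the argument. In short, your route is the natural one and is, in substance, the proof given in \cite{AC}; the paper's citation buys brevity, while your write-up buys self-containedness and makes visible that the entire weighted theory hinges on the single canonical-divisor identity $\iota(\mk_G)=\mk_{\lm}$.
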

This is \cite[Thm 1.12]{BNRR} for loopless, weightless graphs;
the extension  to general graphs can be found in \cite{AC}.

From Riemann-Roch we immediatly derive the following facts.
\begin{remark}
\label{st}
Let $\md\in \Div^0(G)$. Then $r_G(\md)\leq 0$ and equality holds if and only if $\md\sim \mo$.

Let $\md\in \Div^{2g-2}(G)$. Then $r_G(\md)\leq g-1$ and equality holds if and only if $\md\sim \mk_{G}$.

\end{remark}
\subsection{Edge contractions and smoothings of nodes}

Let    $S\subset E(G)$ be a set of edges. By $G/S$ we denote the graph obtained by contracting to a point
(i.e. a vertex of $G/S$)  every edge in $S$; 
the associated map will be  denoted by
$$\sigma:G\to G/S.$$  
There is an obvious identification  
  $E(G/S)=E(G)\smallsetminus S$. The map $\sigma$ induces a surjection
  $$
  \sigma_V:V(G)\la V(G/S);\  \  \  v\mapsto \sigma(v).
  $$
For   $\oov\in V(G/S)$ we set      $\ov{\omega}(\oov)=\sum_{v\in   \sigma_V^{-1}(\oov)}\omega(v)+b_1(\sigma^{-1}(\oov))$ for its weight,
so that $\ov{\omega}(\oov)$ is  the genus of the (weighted) graph $\sigma^{-1}(\oov)$.
  We refer to $G/S$ as a {\it contraction} of $G$; notice that $G$ and $G/S$ have the same genus.
  A picture  can be found in Example~\ref{failsc}.
\begin{remark}
\label{smoothcont}
Contractions   are particularly interesting for us, as they correspond
to  ``smoothings" of  algebraic curves. 
More precisely, let $\phi:\X\to B$ be a one-parameter family of curves having $X$ as special fiber,
and let $n\in X$ be a node; we say that $\phi$ is a smoothing of $n$ if $n$ is not the specialization
of a node of the generic fiber
(i.e. if there is an open neighborhood $U\subset \X$ of $n$ such that 
the restriction of $\phi$ to $U\smallsetminus n$ has  smooth fibers).
Let $G$ be the dual graph of $X$ and let $S\subset E(G)$ be the set of edges corresponding to nodes $n$ such that $\phi$ is a smoothing of $n$. Then,   the contraction
$G/S$   is the dual graph of the fibers of $\phi$ near $X$.
The converse also holds, i.e. for any contraction $G\to G/S$ there exists a deformation of $X$ smoothing precisely the nodes corresponding to $S$.
\end{remark}
  
Observe now that associated to $\sigma:G\to G/S$  there is   a map
$$
\sigma_*: \Div (G) \la  \Div (G/S);\quad \quad  \sum_{v\in V(G)}n_v v\mapsto 
\sum_{\ov{v}\in V(G/S)}\bigr(\sum_ {v\in \sigma_V^{-1}(\ov{v})}n_v \bigl)\ov{v}.
$$
We need the following  fact (essentially due to Baker-Norine, \cite{BNhyp}).
 \begin{prop}
 \label{s*}
 Let $G$ be a graph,  $e\in E(G)$, and let $\sigma:G\to  G/e$ be the contraction of $e$.
 Then
  \begin{enumerate}
  \item
  \label{contfun}
$\sigma_*: \Div (G) \to  \Div (G/e)$ 
is a surjective  group homomorphism such that
 $\sigma_*(\Prin (G))\supset \Prin (G/e).$ 
 \item
 \label{bridge}
 $\Pic (G) \cong \Pic (G/e)$ if and only if $e$ is a bridge
 (i.e. a separating edge).
 In this case the above isomorphism is induced by $\sigma_*$, and $\sigma_*$ preserves the rank.
 \end{enumerate}
\end{prop}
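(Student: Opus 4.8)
The plan is to reduce everything to the behaviour of $\Prin$ under $\sigma_*$. Throughout I assume $e$ joins two \emph{distinct} vertices $v$ and $w$; if $e$ is a loop then $\sigma_*$ is the identity and, since $\Pic$ and the rank ignore loops and weights, every assertion holds trivially (such an $e$ is not a bridge, so the \emph{only if} of part (2) is to be read for non-loop edges). For part (1), the map $\sigma_*$ is a homomorphism by $\Z$-linearity and is surjective because every vertex of $G/e$ has a preimage in $V(G)$. The inclusion $\sigma_*(\Prin(G))\supseteq\Prin(G/e)$ rests on one identity: for a function $\ov f\colon V(G/e)\to\Z$ with pullback $f:=\ov f\circ\sigma_V$ one has $\sigma_*(\div f)=\div(\ov f)$. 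I would check this vertex by vertex from \eqref{ord}, using $(\oov\cdot\ov u)_{G/e}=(v\cdot u)_G+(w\cdot u)_G$ at a vertex $\ov u\neq\oov$, and, at the merged vertex $\oov$, summing $\ord_v(f)+\ord_w(f)$; the contribution of the edge $e$ drops out because $f(v)=f(w)$. Since every element of $\Prin(G/e)$ is such a $\div(\ov f)$, it lies in $\sigma_*(\Prin(G))$.

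For the \emph{if} half of part (2) the crucial point is that $v\sim w$ when $e$ is a bridge. A bridge is the unique edge between its endpoints and $G\smallsetminus e=G_v\sqcup G_w$; taking $f$ to be the indicator of $V(G_v)$, the computation \eqref{ord} gives $\ord_u(f)=0$ off $\{v,w\}$ and $\ord_v(f)=-\ord_w(f)=1$, so $\div f=v-w$. Hence $\ker(\sigma_*\colon\Div(G)\to\Div(G/e))=\Z(v-w)\subseteq\Prin(G)$. Writing $\Prin_0(G):=\{\div g:\ g(v)=g(w)\}$, the assignment $\div g\mapsto g(v)-g(w)$ is a well-defined surjection $\Prin(G)\to\Z$ (two functions with equal divisor differ by a constant) with kernel $\Prin_0(G)$, so $\Prin(G)/\Prin_0(G)\cong\Z$; as $v-w=\div f$ maps to $1$, for a bridge one gets $\Prin(G)=\Prin_0(G)+\Z(v-w)$. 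Since $\sigma_*(\Prin_0(G))=\Prin(G/e)$ by part (1) and $\sigma_*(v-w)=0$, this yields $\sigma_*(\Prin(G))=\Prin(G/e)$, so $\sigma_*$ descends to a surjection $\Pic(G)\to\Pic(G/e)$; and the same description shows $\sigma_*^{-1}(\Prin(G/e))=\Prin_0(G)+\Z(v-w)=\Prin(G)$, whence the descended map is injective, i.e. the asserted isomorphism induced by $\sigma_*$.

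For the \emph{only if} half I would argue by cardinality. The degree sequence splits, so $\Pic(G)\cong\Z\oplus\Pic^0(G)$ with $\Pic^0(G)$ finite of order $\tau(G)$, the number of spanning trees recalled above; an abstract isomorphism $\Pic(G)\cong\Pic(G/e)$ identifies torsion subgroups and forces $\tau(G)=\tau(G/e)$. The classical deletion--contraction identity $\tau(G)=\tau(G\smallsetminus e)+\tau(G/e)$ then gives $\tau(G\smallsetminus e)=0$, i.e. $G\smallsetminus e$ is disconnected, i.e. $e$ is a bridge. For rank preservation under a bridge I would first reduce to the weightless loopless case: contracting the bridge commutes with forming the model, $(G/e)^{\bullet}=\lm/e$ with $e$ still a bridge in $\lm$, and $\iota$ intertwines the two pushforwards, so by \eqref{rankwl} it suffices to prove $r_{\lm}(\iota\md)=r_{\lm/e}(\sigma_*\iota\md)$ directly from \eqref{trasl}. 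One inequality uses that $\sigma_*$ carries effective divisors to effective ones and preserves $\sim$ (the isomorphism just proved): lifting a test divisor $\ov\me$ on $G/e$ to an effective $\me$ on $G$ and pushing forward a witness $\md'\geq\me$ gives $r_{G/e}(\sigma_*\md)\geq r_G(\md)$. For the reverse, given effective $\me$ on $G$ and an effective $\ov{\md}'\sim\sigma_*\md$ with $\ov{\md}'\geq\sigma_*\me$, I would lift $\ov{\md}'$ to an effective $\md'\sim\md$ with $\md'\geq\me$ by splitting the coefficient $\ov{\md}'(\oov)\geq\me(v)+\me(w)$ between $v$ and $w$, the available freedom being exactly $\ker\sigma_*=\Z(v-w)\subseteq\Prin(G)$.

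The main obstacle is the structural analysis of $\Prin$ in part (2): establishing $\Prin(G)/\Prin_0(G)\cong\Z$ and that $v-w\in\Prin(G)$ precisely for bridges, which is what makes $\sigma_*$ descend to an isomorphism rather than a mere surjection. The rank statement is then largely bookkeeping, the one genuinely delicate step being the coefficient-splitting at the merged vertex $\oov$ that lifts an effective representative back across $\sigma_*$; the reduction to $\lm$ must also be checked to be compatible with the contraction, but this is routine.
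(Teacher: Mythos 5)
Your proposal is correct, and for part (2) it takes a genuinely different route from the paper. Part (1) is essentially the paper's argument in different clothing: the paper verifies the single identity $\sigma_*(\mt_i)=\ov{\mt_i}$ on the generators $\mt_v$ of $\Prin(G)$ (divisors of indicator functions), which is exactly your vertex-by-vertex computation specialized to indicators; by linearity the two formulations are equivalent. For the bridge half of part (2), however, the paper does not argue directly at all: it quotes Baker--Norine \cite{BNhyp} (Lm.~5.7, Cor.~5.10) for a rank-preserving isomorphism $\Pic(\lm)\cong\Pic(\lm/e)$ and transfers it through $\lm/e=(G/e)^{\bullet}$ and the injection \eqref{inj.}, whereas you build everything by hand: $v-w=\div(f)$ for the indicator of one side of the bridge, $\ker\sigma_*=\Z(v-w)\subseteq\Prin(G)$, the decomposition $\Prin(G)=\Prin_0(G)+\Z(v-w)$ giving descent to an isomorphism, and the coefficient-splitting lift of effective representatives for rank preservation. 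Your route buys self-containedness (no harmonic-morphism machinery) at the cost of length; the paper's buys brevity but leaves the rank statement as a black box. For the ``only if'' half both arguments are cardinality counts via the complexity $c(G)$: the paper constructs the bijection between spanning trees of $G/e$ and spanning trees of $G$ containing $e$ and exhibits a spanning tree avoiding $e$, while you invoke the classical deletion--contraction identity $c(G)=c(G\smallsetminus e)+c(G/e)$ -- the same fact, differently packaged; your preliminary observation that an \emph{abstract} isomorphism $\Pic(G)\cong\Pic(G/e)$ forces equal complexity (because $\Pic\cong\Z\oplus\Pic^0$ with $\Pic^0$ the torsion subgroup) makes explicit a step the paper leaves implicit. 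Finally, your separate treatment of loops is a genuine gain in precision: for a loop $e$ the map $\sigma_*$ is the identity and $\Pic(G)=\Pic(G/e)$ although $e$ is not a bridge, so the ``only if'' direction must indeed be restricted to non-loop edges -- a case the paper's spanning-tree bijection silently excludes.
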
 
\begin{proof}
It is clear that $\sigma_*$ is a surjective homomorphism.
Let $v_0,v_1\in V(G)$ be the endpoints of $e$.
Set  $\ov{G}:=G/e$, now write    
$V(G)=\{v_0, v_1,  \ldots, v_n\}$
and $V(\ov{G})=\{\ov{v_1}, \ldots, \ov{v_n}\}$ with $\sigma_V(v_i)=\ov{v_i}$ for $i\geq 1$.

Denote by $\mt_i=((v_0\cdot v_i), (v_1\cdot v_i),\ldots,(v_n\cdot v_i))\in \Prin (G)$   the principal divisor corresponding to $v_i$, defined in \eqref{mt},
and  by $\ov{\mt_i}$   the principal divisor of ${\ov{G}}$ corresponding to $\ov{v_i}$.
As we mentioned earlier, it suffices to show that  $\ov{\mt_i}\in \sigma_*(\Lambda_{G})$ for $i=2,\ldots,n$.
This follows from the    identity 
\begin{equation}
\label{push}
\sigma_*(\mt_i)=\ov{\mt_i},\  \quad  \forall i=2,\ldots,n.
\end{equation}
Let us  prove it for $i=2$ (which is obviously enough).
We have 
$$
\sigma_*(\mt_2)=((v_0\cdot v_2)+(v_1,\cdot v_2),  (v_2 \cdot v_2),\ldots,(v_n \cdot v_2)),
$$
now $(v_0\cdot v_2)+(v_1,\cdot v_2)=({\ov{v_1}}\cdot {\ov{v_2}})$  and  
 $(v_i \cdot v_2)=({\ov{v_i}}\cdot {\ov{v_2}})$ for every $i\geq 2$
hence (\ref{push}) is proved.

Part \eqref{bridge}. Suppose  $e$ is a bridge;
then by  \cite[Lm. 5.7, Cor. 5.10]{BNhyp} there is a rank-preserving isomorphism
$\Pic(G^{\bullet})\cong \Pic(G^{\bullet}/e)$. Of course, $G^{\bullet}/e=(G/e)^{\bullet}$, hence
by \eqref{inj.},  we obtain a rank preserving isomorphism $\Pic(G)\cong \Pic(G/e)$.

Assume $e$ is not a bridge. 
Recall that for any $d $ and any  $G$  the set $\Pic^dG$ has
  cardinality equal to the complexity, $c(G)$, of $G$.
Therefore it is enough to prove that $G$ and ${\ov{G}}$
have different complexity.
Now, it is easy to see that the contraction map $\sigma:G\to \ov{G}$ induces a bijection
between the spanning trees of $\ov{G}$ and the spanning trees of $G$ containing $e$.
On the other hand, since $e$ is not a bridge,   $G$ admits a spanning tree not containing $e$
(just pick a spanning tree of the connected graph $G-e$).
We thus proved that $c(G)>c(\ov{G})$, and we are done.
\end{proof}

We observed in Remark~\ref{smoothcont} that
 one-parameter families of curves correspond to
  edge contractions of graphs.
Now, in algebraic geometry the rank of a divisor is an upper-semicontinuous function:
given a family of curves $X_t$ specializing to a curve $X$, with a family of divisors $D_t\in  \Div (X_t)$
specializing to $D\in  \Div (X)$, we have $r(X_t,D_t)\leq r(X,D)$.
 
 Do we have a corresponding   semicontinuity for the combinatorial rank?
  The answer in general is no.
 By Proposition~\ref{s*}, contraction of bridges preserves the rank. But
the following example illustrates    that the rank can 
both decrease  or increase
if a non-bridge is contracted.

\begin{example}
\label{failsc}{\it Failure of semicontinuity under edge contractions.}
Consider the contraction of the edge $e_4\in E(G)$ for the graph $G$ in the picture below.
\begin{figure}[!htp]
$$\xymatrix@=1pc{
&*{\circ} \ar@{-}[rrr] _{e_2}^(0.1){v_1}^(1.1){v_2}\ar@{-}@/^.5pc/[rrr]^{e_1} \ar@{-}[ddrrr]_{e_3} ^(1.1){v_3} && &*{\circ}
\ar@{-}[dd]^{e_4}
& & 
 &\\
G \, =  &&&
&&\ar[rrrr] &&& &&*{\circ}\ar@{-}[rrr]_{e_2} ^(1.1){w_2}\ar@{-}@/^.5pc/[rrr]^{e_1} ^(1.1){w_1}
\ar@{-}@/_1.2pc/[rrr]_{e_3} &&&*{\circ} & \, = G/{e_4} \\
&& &&*{\circ} \& &
&&
&&&&
}$$
\caption{Contraction of $e_4$}
\end{figure}
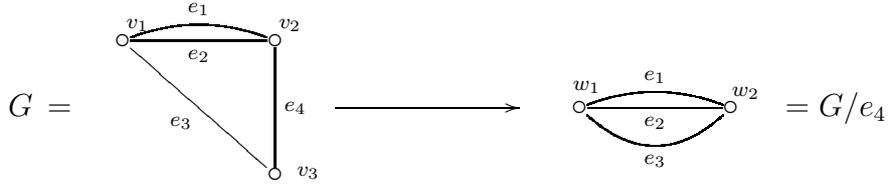

Let us first show that  the combinatorial rank may decrease.
Pick $\md=(-2,3,-1)\in \Div(G)$; then
 $r_{G}(\md)=0$ as 
 $$
 \md=-\mt_{v_2}\sim  (0,0,0).
 $$ Now
$\sigma_*(\md)=(-2,2)$ and hence  
$$r_{G/e_{4}}(\sigma_*(\md))=-1<r_G(\md).
$$

Now let us show that the combinatorial rank may   go up.
Consider $\md=(1,-1,1)\in \Div(G)$; then one checks easily (or by Lemma~\ref{kZ})
that $r_{G}(\md)=-1.$
Now
$\sigma_*(\md)=(1,0)$ hence  $r_{G/e_{4}}(1,0)=0>r_G(\md)$.
 
Let us give also an example with  $r_{G} \geq 0.$
Pick  $\me=(1,-1,2)$ so that 
$$
r_{G/e_{4}}(\sigma_*(\me))=r_{G/e_{4}}(1,1)=1.
$$
Now $\me+\mt_{v_3}=(1,-1,2)+(1,1,-2)= (2,0,0)$, hence 
  $r_{G}(\me)\geq 0.$ To show that $r_{G}(\me)\leq 0$
we note that if we subtract $(0,0,1)$ from $\me$ we get $(1,-1,1)$, which has rank $-1$,
 as  observed above. 

\end{example}
A convenient computational tool is provided by the following Lemma,
of which we  had originally a slightly less general version; the following version was suggested by the referee.

\begin{lemma}
\label{kZ}
Fix an integer  $r\geq 0$ and let $\md \in \Div(G)$ be such that for some $v\in V(G)$ we have 
$\md(v)<r$. Assume that
for every subset of vertices $Z\subset V(G)\smallsetminus \{v\}$
we have $\md(Z)< (Z\cdot Z^c)$. Then $r_{G}(\md)\leq r-1.$
\end{lemma}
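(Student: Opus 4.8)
\emph{The plan.} By definition \eqref{trasl} it suffices to exhibit a single effective divisor $\me$ of degree $r$ for which no $\md'\sim\md$ satisfies $\md'\geq\me$; this forces $r_G(\md)<r$. The natural candidate is $\me=rv$, which is effective because $r\geq 0$ and has degree $r$. So I would argue by contradiction, assuming that some $\md'\sim\md$ dominates $rv$, i.e.\ $\md'(v)\geq r$ and $\md'(w)\geq 0$ for every $w\neq v$, and derive a violation of the hypotheses.

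I first carry this out when $G$ is loopless and weightless, so that \eqref{trasl} computes $r_G$ directly. Write $\md'=\md+\div(f)$ for some $f\colon V(G)\to\Z$. Reading off the coefficient at $v$ gives
$$\ord_v(f)=\md'(v)-\md(v)\geq r-\md(v)>0,$$
using $\md(v)<r$. Let $Z_0\subset V(G)$ be the set on which $f$ attains its minimum. By the second assertion of Remark~\ref{cnerlm}, if $v$ belonged to $Z_0$ we would have $\div(f)(v)=\ord_v(f)\leq 0$; since $\ord_v(f)>0$, this forces $v\notin Z_0$, so $Z_0$ is a nonempty subset of $V(G)\smallsetminus\{v\}$. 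Now the first assertion of Remark~\ref{cnerlm} together with the hypothesis applied to $Z_0$ gives
$$\md'(Z_0)=\md(Z_0)+\div(f)(Z_0)\leq \md(Z_0)-(Z_0\cdot Z_0^c)<0,$$
which contradicts $\md'(Z_0)=\sum_{w\in Z_0}\md'(w)\geq 0$, valid because $v\notin Z_0$. Hence no such $\md'$ exists and $r_G(\md)\leq r-1$.

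For a general (weighted, possibly looped) $G$ one has $r_G(\md)=r_{\lm}(\iota\md)$, so I would rerun the same argument on $\lm$ with $\iota\md$ and $\me=rv$ in place of $\md$ and $\me$. The one point needing care --- and the step I expect to be the real obstacle --- is that the hypotheses are phrased via the intersection product on $G$ and must survive the subdivision from $G$ to $\lm$. Given a nonempty minimum set $\widehat{Z}\subseteq V(\lm)\smallsetminus\{v\}$, put $Z=\widehat{Z}\cap V(G)$. Since $\iota\md$ agrees with $\md$ on $V(G)$ and vanishes on the inserted midpoint vertices, $\iota\md(\widehat{Z})=\md(Z)$; and since every non-loop edge of $G$ persists in $\lm$, the cut only grows, $(Z\cdot Z^c)_G\leq(\widehat{Z}\cdot\widehat{Z}^c)_{\lm}$. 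If $Z\neq\emptyset$ the hypothesis yields $\iota\md(\widehat{Z})=\md(Z)<(Z\cdot Z^c)_G\leq(\widehat{Z}\cdot\widehat{Z}^c)_{\lm}$; if $Z=\emptyset$ then $\widehat{Z}$ consists only of midpoint vertices, each of which contributes positively to the cut, so $\iota\md(\widehat{Z})=0<(\widehat{Z}\cdot\widehat{Z}^c)_{\lm}$. In either case the inequality needed in the previous paragraph holds on $\lm$, and the contradiction goes through verbatim.

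The conceptual content is thus a short maximum-principle argument (the minimum set of $f$ must avoid $v$, because $\ord_v(f)>0$ makes $v$ behave like a local maximum), packaged entirely through Remark~\ref{cnerlm}. I expect the genuinely technical part to be only the bookkeeping of the final paragraph: checking that subdividing loops and absorbing weights into $\lm$ neither raises $\iota\md$ on a vertex set nor lowers the corresponding edge cut, so that the $G$-level hypotheses transfer to $\lm$.
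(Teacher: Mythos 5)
Your proof is correct and follows essentially the same route as the paper's: the same choice of effective divisor $rv$, the same contradiction set-up, and the same maximum-principle argument via Remark~\ref{cnerlm} showing the minimum set of $f$ avoids $v$ and then violates the cut hypothesis. The only difference is that you verify in detail that the hypotheses transfer from $G$ to $\lm$ (via $Z=\widehat{Z}\cap V(G)$ and monotonicity of the cut), a point the paper disposes of in one sentence; your bookkeeping there is accurate and arguably a useful addition.
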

\begin{proof}
Since both hypotheses remain valid in $\lm$, and 
$r_{G}(\md)$
 is defined as the rank of $\md$ on $\lm$,
  we can assume $G$ weightless and loopless.

For notational consistency, 
write $\me\in \Div^1_+(G)$ for the (effective) divisor corresponding to $v$.
By contradiction, suppose $r_{G}(\md)\geq r$; hence  $r_{G}(\md-r\me)\geq 0$, but $\md-r\me$ is not effective by hypothesis.
Therefore 
for some  nontrivial principal divisor  $\mt=\div(f)\in \Prin(G)$ we have
$$
0\leq  \md-r\me+\mt. 
$$   
%By \cite[Lm. 3.4]{Cner}  there exists $Z\subset V(G)$ such that 
We use Remark~\ref{cnerlm}; let $Z\subset V(G)$ be the set of vertices where
$f$ assumes its minimum; then 
 $\mt(Z)\leq -(Z\cdot Z^c)$. We have $v\not\in Z$,
for otherwise $\mt(v)\leq 0$ hence $ (\md-r\me+\mt)(v)<r-r=0$
which is impossible.
Therefore, by hypothesis,   $\md(Z)< (Z\cdot Z^c)$, which  yields
(as $\me(Z)= 0$)
$$
 0\leq (\md-r\me +\mt)(Z)=  \md(Z)-r\me(Z)+ \mt(Z)\leq  \md(Z)-(Z\cdot Z^c)<0,
$$
  a contradiction.
\end{proof}

\section{Algebraic interpretation of the combinatorial rank}
\label{sec2}

Let $G$ be a graph of genus least 2. We say $G$     is  {\it semistable}
if  every vertex of weight zero has valency at least 2, and we say $G$ is 
{\it stable} if every vertex of weight zero has valency at least 3.
This terminology is motivated by the fact that  a curve $X$ of arithmetic    genus at least 2 is semistable, or stable, 
  if and only if so is its dual graph.

\subsection{A conjecture}
If $G$ is a stable graph, 
the locus of isomorphism classes of curves whose dual graph is $G$
is an interesting subset of the moduli space of stable curves, denoted 
 $\MaG\subset \Mgb$;
 it is well known that $\MaG$ is irreducible, quasiprojective of dimension $3g-3-|E(G)|$.
More generally,  i.e. for any graph, we denote by
 $\MaG$   the set of isomorphism classes of curves having $G$ as dual graph.

Let $X\in \MaG$ and $\md \in \Div(G)$,
we denote
$$
%r^{\rm{max}}_X^{\md}
\RX:=\max \{r(X,L),\  \    \forall L\in \Pic^{\md}(X)\}.%=\max \{r: W^r_{\md}(X)\neq \emptyset\}.
$$
By Riemann-Roch  we have
\begin{equation}
\label{RRin}
\RX\geq \max\{-1, |\md|-g\}.
\end{equation}
We want to study the relation between $r_G(\md)$ and $\RX$.
Now, the combinatorial rank $r_G$ is constant in an equivalence class, hence
we set, for any $\delta\in \Pic (G)$ and $\md\in \delta$
$$
r_{G}(\delta):=r_G(\md).
$$
On the other hand, we saw in Proposition~\ref{rdiff} that the algebraic rank behaves badly
with respect to linear equivalence of multidegrees, indeed, it is unbounded on the fibers of $q_{\phi}$. Therefore
  we set
$$
\rdel:=\min \{\RX,\  \  \  \forall \md\in \delta\}.
$$
Now,  having the   analogy with \eqref{toy} in mind,
we state
\begin{conj}
\label{toy2}
Let $G$ be a graph  and $\delta\in \Pic ^d(G)$.
Then
$$
 r_{G}(\delta)=\max \{r(X,\delta),\  \   \forall X\in \MaG\}.
$$
\end{conj}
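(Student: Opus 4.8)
The identity splits into two inequalities, which I would attack by opposite methods. Writing $r(X,\delta)=\min_{\md\in\delta}\rmax(X,\md)$ as in the setup, the bound $r(X,\delta)\le r_G(\delta)$ must hold for \emph{every} $X\in\MaG$ (this caps the right-hand maximum), whereas $r_G(\delta)\le r(X,\delta)$ only has to be arranged for a \emph{single} extremal curve. The sanity check is the edgeless graph $G_g$: there $\MaG=M_g$, the class $\delta$ is the unique degree-$d$ class, and \eqref{toy} is exactly Clifford's and Riemann's theorems (upper bound) together with the existence of hyperelliptic curves attaining them (lower bound). The plan is to generalize these two classical inputs separately.

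For the upper bound $r(X,\delta)\le r_G(\delta)$ it suffices, since $r(X,\delta)$ is a minimum over $\md\in\delta$, to exhibit \emph{one} representative $\md\in\delta$ with $\rmax(X,\md)\le r_G(\md)$. One cannot hope for this for an arbitrary $\md$: Proposition~\ref{rdiff}(\ref{rd}) shows $\rmax(X,\md)$ is unbounded as $\md$ varies in its class, so the minimization is essential. I would therefore choose $\md$ to be a \emph{balanced} representative, whose degree on every subcurve is pinned down by semistability (balancing) inequalities, and then estimate $\rmax(X,\md)$ by decomposing $X=Z\cup Y$ and iterating the gluing bound of Remark~\ref{rkin}(1), combined with Clifford's inequality on each piece and with Remark~\ref{rkin}(3) to peel off components of non-positive degree. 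The output should reproduce the graph Clifford bound (Proposition~\ref{rdiff}(\ref{rcl})) and, through Riemann--Roch, the value $r_G(\md)=r_G(\delta)$; Lemma~\ref{kZ} supplies the combinatorial certificate that the rank drops precisely when some subcurve forces a base locus.

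For the reverse inequality I would construct an extremal $X\in\MaG$ by induction on $|E(G)|$, reading edge contractions as algebraic smoothings of nodes (Remark~\ref{smoothcont}), with base case the edgeless $G_g$ handled as above. For the inductive step along a \emph{bridge} $e$, Proposition~\ref{s*}(\ref{bridge}) gives a rank-preserving isomorphism $\Pic(G)\cong\Pic(G/e)$, while algebraically $e$ corresponds to a separating node; Remark~\ref{rkin}(2) then lets me splice an extremal bundle from each side into an extremal bundle on $X$ with no loss of rank, so an extremal curve for $G/e$ lifts to one for $G$. Concretely I would convert the combinatorial effectivity in \eqref{trasl}---for each effective $\me$ a representative $\md'\sim\md$ with $\md'-\me\ge 0$---into the existence of global sections of the associated line bundle, using twisters (the group $\Tw_{\phi}X$, with image $\Prin(G)$) to pass between the various multidegrees of the class $\delta$.

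The main obstacle, and my reason for expecting only a partial result, is the contraction of a \emph{non-bridge} edge: Example~\ref{failsc} shows the combinatorial rank is neither upper- nor lower-semicontinuous there, so the inductive assembly of an extremal curve from separating pieces stalls, and dually the gluing principle Remark~\ref{rkin}(2) no longer forces sections to glue across components meeting at several points. Realizing the full combinatorial rank by genuine global sections, \emph{simultaneously} for every multidegree in $\delta$, then demands an input beyond the reductions above. I would accordingly first settle all $G$ that reduce to $G_g$ by contracting bridges and loops (e.g. trees of cycles, and the low-genus range such as $g=2$), and isolate the non-separating case as the residual difficulty requiring a genuinely combinatorial argument.
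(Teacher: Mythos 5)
First, a framing point: the statement you are proving is a \emph{conjecture} in the paper, and the paper itself establishes it only in the cases collected in its final Summary ($g\leq 1$; $d\leq 0$ or $d\geq 2g-2$; $|V(G)|=1$; stable graphs of genus $2$). Your expectation of ``only a partial result'' is therefore the right one, and parts of your plan do track the paper's partial results: your upper-bound mechanism (a balanced representative plus extensions of Riemann's and Clifford's theorems to nodal curves) is exactly how Theorem~\ref{Riemann} handles $d\geq 2g-2$ via semibalanced representatives, and your base case via hyperelliptic curves is Corollary~\ref{corir}. However, two of your steps have concrete gaps. The first is the upper bound in the Clifford range $0<d<2g-2$: gluing Clifford-type estimates on subcurves via Remark~\ref{rkin} can only bound $\rmax(X,\md)$ by functions of the \emph{degree} (such as $d/2$), whereas $r_G(\delta)$ in this range depends on the class $\delta$ itself --- on the $3$-edge binary graph of genus $2$ the three degree-$2$ classes have ranks $1,0,0$ (see the table in the proof of Proposition~\ref{g2thm}), and most classes of small degree have rank $-1$. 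Matching the exact value $r_G(\delta)$ requires exhibiting, for the special classes, a representative $\md$ such that \emph{every} $L\in\Pic^{\md}(X)$ has small rank; Lemma~\ref{kZ} is only a sufficient combinatorial criterion and has no ready algebraic counterpart, which is why the paper resorts to explicit computations in genus $\leq 2$ rather than any general gluing argument.

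The second gap is the lower-bound induction along bridges. The paper's only contraction-based reduction is Lemma~\ref{tail}, for \emph{leaf}-edges, where the discarded side is a rational tail carrying no moduli and no rank; your extension to arbitrary bridges does not go through as described. If you try to lift an extremal curve $X'\in M^{\operatorname{alg}}(G/e)$ by realizing it as a generic fiber of a smoothing of some $X\in\MaG$ (so as to invoke upper semicontinuity of the rank), you need extremal curves for $G/e$ to be \emph{generic} in $M^{\operatorname{alg}}(G/e)$; Example~\ref{binex} shows they can instead form a proper special locus (the special binary curves), and semicontinuity says nothing about a single non-generic fiber. If instead you splice bundles via Remark~\ref{rkin}(2), you control one multidegree at a time, but $r(X,\delta)$ is a minimum over \emph{all} $\md\in\delta$, and for a bridge the degree flows freely across $e$ under linear equivalence (adding $\mt_{v_0}$ moves degree from one side of $e$ to the other, e.g. $(1,-1)\sim(0,0)$ on a single bridge), so $\delta$ does not determine the classes of the two pieces; handling every representative simultaneously is precisely where the difficulty sits, not a detail that Proposition~\ref{s*}(\ref{bridge}) resolves. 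Finally, your proposed coverage does not match the paper's: the paper's genus-$2$ case includes the $3$-edge binary graph, which has no bridges and does not reduce to $G_g$, while conversely the paper does not prove the conjecture for general bridge-trees of positive-weight vertices in genus $\geq 3$.
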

We set
$$
\rgdel:=\max \{\rdel,\  \  \  \forall X\in \MaG\},
$$
so that the above conjecture becomes
\begin{equation}
\label{thalg}
\rgdel=r_{G}(\delta).
\end{equation}
We think of $\rgdel$ as the ``algebro-geometric" rank of the combinatorial class $\delta$.
We shall prove that \eqref{thalg} holds in low genus and for $d\geq 2g-2$.

\begin{remark}
Stable and semistable curves are of fundamental importance in algebraic geometry; see \cite{gac}, \cite{DM}, \cite{HMo}.
We shall see, as a consequence of Lemma~\ref{tail}, that if Identity   \eqref{thalg} holds for semistable graphs, it holds for any graph.
\end{remark}
The following is a simple evidence for the conjecture. 
 
\begin{lemma}
\label{zero}
 Conjecture~\ref{toy2}  holds for
 $\delta =0$. More precisely for every $G$ and $X\in \MaG$ we have 
 $\RX=r_{G}(\delta) =0$.
\end{lemma}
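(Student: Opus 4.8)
The plan is to separate the statement into a combinatorial half and an algebraic half and then reconcile the two. The combinatorial half is immediate: since $\delta=0$ is represented by $\mo\in\Div^0(G)$ and $\mo\sim\mo$, Remark~\ref{st} gives $r_G(\mo)=0$, so $r_G(\delta)=0$.

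For the algebraic half I would first compute the value at the tautological representative $\md=\mo$. For any $X\in\MaG$ the trivial bundle has $h^0(X,\O_X)=1$, because $X$ is connected, reduced and proper and hence carries only constant global functions; thus $r(X,\O_X)=0$ and $\RX\geq 0$. Conversely, suppose $L$ has multidegree $\mo$ and a nonzero section $s$. On each component $C_v$ the restriction $s_{|C_v}$ is a section of a degree-$0$ line bundle, so it is either identically zero or nowhere vanishing; as $s$ is single-valued at each node, if $s_{|C_v}\not\equiv 0$ then $s$ is nonzero at every node of $C_v$ and hence $s_{|C_w}\not\equiv 0$ for every $C_w$ meeting $C_v$. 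By connectedness of $X$ this forces $s$ to be nowhere vanishing on all of $X$, so $L\cong\O_X$. Therefore $h^0(X,L)\leq 1$ for every $L$ of multidegree $\mo$, giving $\RX=\rmax(X,\mo)=0$ for every $X$; this is the ``more precisely'' assertion. In particular, using $\mo$ as a representative, $\rgdel=\max_X\min_{\md\in\delta}\RX\leq\max_X\rmax(X,\mo)=0$.

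For the reverse inequality I must rule out that the minimum over representatives drops below $0$, i.e. I must show every principal multidegree is effective on $X$. Here I would invoke the smoothing picture of Section~\ref{sec1}: fix a regular one-parameter smoothing $\phi:\X\to B$ of $X$, so that $\Prin(G)=\Lambda_X$ is realized by twisters $\O_\X(D)_{|X}$ with $\supp D\subset X$. Given $\md\in\Prin(G)$, write $\md=\div(f)$ and set $h:=\max f-f$, so that $h\geq 0$ and $h$ vanishes exactly where $f$ is maximal. Using $\mt_v=\mdeg\ \O(C_v)=-\div(f_v)$, with $f_v$ the indicator of $v$, the effective divisor $D:=\sum_{v}h(v)C_v$ on $\X$ satisfies $\mdeg\ \O_\X(D)_{|X}=-\div(h)=\md$. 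Since $\supp D$ omits the components where $f$ is maximal, the tautological section of $\O_\X(D)$ restricts to a nonzero section of $\O_\X(D)_{|X}$, whence $\RX\geq 0$. Combined with the previous paragraph this yields $\rdel=\min_{\md\in\delta}\rmax(X,\md)=0$ for every $X$, hence $\rgdel=0=r_G(\delta)$.

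The combinatorial step and the multidegree bookkeeping are routine; the one genuinely geometric point, and the step I expect to demand the most care, is the effectivity of an arbitrary principal multidegree, for which the reduction to the tautological section of an honestly effective divisor on the regular total space $\X$ is the key device. The normalization $h=\max f-f$ is what guarantees simultaneously that $D$ is effective and that $\supp D\neq X$, so that the restricted section does not vanish identically.
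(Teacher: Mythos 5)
Your proof is correct, and it shares the paper's overall skeleton: $r_G(\delta)=0$ is immediate; $\rmax(X,\mo)=0$ because the only multidegree-$\mo$ line bundle with a nonzero section is $\O_X$ (the paper dismisses this as ``easy to check'', while you supply the component-by-component dichotomy argument); and the remaining representatives of $\delta$ are handled through the identification $\Prin(G)=\Lambda_X$ from Subsection~\ref{comp}, realizing each $\md\in\delta$ as the multidegree of a twister $T=\O_{\X}(D)_{|X}$. Where you genuinely diverge is at the one geometric step, namely why $r(X,T)\geq 0$. The paper gets this in one line from upper semicontinuity of the rank: $T$ is a specialization of the trivial bundle on the nearby smooth fibers of $\phi$, so $h^0(X,T)\geq 1$. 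You avoid semicontinuity entirely by choosing $D$ well: writing $\md=\div(f)$ and setting $h=\max f-f$, your $D=\sum_v h(v)C_v$ is effective with $\supp D\subsetneq X$, so the tautological section of $\O_{\X}(D)$ restricts to a section of $T$ that is not identically zero (it survives on the components where $f$ attains its maximum), giving $h^0(X,T)\geq 1$ directly; the multidegree computation $\mdeg\ \O_{\X}(D)_{|X}=-\div(h)=\div(f)=\md$ is the same intersection-theoretic bookkeeping the paper sets up in \eqref{mt}. What each approach buys: the paper's argument is shorter and insensitive to which divisor $D$ supported on $X$ represents $\md$; yours is more elementary and exhibits the section explicitly, at the cost of the normalization of $f$, which is precisely what secures effectivity of $D$ and $\supp D\neq X$ simultaneously. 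One small caution on wording: what you prove (and all that is needed) is that every principal multidegree is the multidegree of a line bundle with $h^0\geq 1$; calling this ``effectivity'' is loose, since your section vanishes identically on the components in $\supp D$ and hence does not define an effective Cartier divisor on $X$.
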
 
  \begin{proof}
We have $r_G(\delta)=0$, of course.
 Now, as we explained in Subsection~\ref{comp}, every $\md\in \delta$ is the multidegree of some twister of $X$; pick one of them, $T$, so that
 $T\in \Pic^{\md}(X)\cap \Tw_{\phi}(X)$ for some regular one-parameter smoothing $\phi$.
 By upper-semicontinuity of the algebraic rank, the twister $T$, being the specialization of the trivial line bundle,
 satisfies  $r(X,T)\geq 0$.
On the other hand $r(X,\O_X)=0$ and  it is easy to check that any other $L\in \Pic^{\mo}(X)$ has rank $-1$;
%(by \cite[Cor. 2.2.5]{Ctheta});
so we are done.
\end{proof}

Here is an example where  Conjecture~\ref{toy2}   holds, and the equality
$\rdel=r_G(\delta)$ does not hold for every $X\in \MaG$.

\begin{example}
\label{binex}
Let $G$ be a {\it binary graph} of genus $g\geq 2$, i.e. $G$ is the graph  with two vertices of weight zero joined by $g+1$ edges. (This graph is sometimes named ``banana" graph; we prefer the word binary for consistency with the terminology used in other papers, such as \cite{Ccliff}.) 
\begin{figure}[h]
\begin{equation*}
\xymatrix@=.5pc{
 &\\
G&=  &&*{\circ}
  \ar @{.} @/_.2pc/[rrrrr]_(0.01){v_1} \ar @{-} @/_1.5pc/[rrrrr]^{e_{g+1}} _(1){v_2}\ar@{-} @/^1.1pc/[rrrrr]^{e_2}\ar@{-} @/^2pc/[rrrrr]^{e_1}
&&&&& *{\circ} &&&&
\\
 &\\
}
\end{equation*}
\end{figure}
 
 Let $\md=(1,1)\in \Div(G)$.
It is clear that $r_G(\md)=1$.

Let now $X$ be a curve whose dual graph is $G$, 
so $X$ has two smooth rational components intersecting in $g+1$ points; we say
 $X$ is  a binary curve.
It is easy to check   that Clifford's theorem holds in this case (i.e. for this multidegree), hence $r(X,L)\leq 1$ for every $L\in \Pic^{(1,1)}(X)$.

Suppose first that $g=2$. Then we claim that for every such $X$ we have $\RX=1$
and there exists a unique   $L\in \Pic^{(1,1)}(X)$ for which $r(X,L)=1$.
Indeed, to prove the existence it suffices to pick  $L=K_X$. The fact that
  there are no other line bundles with this multidegree and rank follows from Riemann-Roch.
  
  Now let $g\geq 2$. 
We say that a binary curve $X=C_1\cup C_2$ is   {\it special} if there is an isomorphism of pointed curves
 $$(C_1;p_1,\ldots p_{g+1})\cong (C_2;q_1,\ldots q_{g+1})$$
 where $p_i,q_i$ are the branches of the $i$-th node of $X$, for $i=1,\ldots g+1$ 
 (if $g=2$ every binary curve is special).
 
 We claim that
 $\RX=1$ if and only if $X$ is   special, 
 and in this case there exists a unique  $L\in \Pic^{(1,1)}(X)$ for which $r(X,L)=1$.
 We use induction on $g$; 
 the base case $g=2$ has already been done .
Set $g\geq 3$ and observe that the desingularization of a special binary curve  at a node is again special.
 
Let $\nu_1:X_1\to X$ be the desingularization of $X$ at one node, 
so that $X_1$ has genus $g-1$. Let $p ,q \in X_1$ be the branches of the  desingularized node.
By induction $X_1$ admits a line bundle $L_1$ of bidegree $(1,1)$ and rank $1$ if and only if $X_1$ is special, and in this case $L_1$ is unique.
Next,  there exists $L \in \Pic ^{(1,1)}(X)$ having   rank $1$ if and only if $X_1$ is special,
$\nu_1^*L=L_1$ and,  
$$
r(X_1,L_1(-p))=r(X_1,L_1(-q))=r(X_1,L_1(-p-q))=0;
$$
moreover such $L$ is unique if it exists
(see  \cite[Lm. 1.4]{Ccliff}).
% (see also  \cite[Lm. 2.4, 2.5]{Ctheta}).
Therefore $L_1=\O(p+q)$, hence $X$ is a special curve. The claim is proved.

 Let us now consider  $\md'\sim \md$ with $\md'\neq \md$: 
 $$
 \md'=(1+n(g+1),1-n(g+1)).
$$
By symmetry  we can assume $n\geq 1$.
Then for any $L\in \Pic^{\md'}X$ we have
$$
r(X,L)=r(C_1, L_{C_1}(-C_1\cdot C_2))=r(\PP^1, \O((n-1)g+n))=(n-1)g+n\geq 1.
$$
Therefore, denoting by $\delta\in \Pic(G)$ the class of $\md=(1,1)$ we have
 $\rdel =\RX
$ for every $X\in \MaG$.

Here is a summary of what we   proved.

\noindent
{\it Let $G$ be a binary graph of genus $g\geq 2$,  $\md=(1,1)$
and $\delta\in \Pic(G)$ the class of $\md$. Pick $X \in \MaG$, 
then }
 \begin{displaymath}
\rdel=\RX=\left\{ \begin{array}{ll}
1  \  &\text{ if $X$ is special} \\
0 \  &\text{ otherwise.}\\
\end{array}\right.
\end{displaymath}
{\it And if $X$ is special there exists a unique $L\in \Pic^{(1,1)}(X) $ having rank $1$.}
\end{example}

\subsection{Low genus cases.}

We use the following terminology. A vertex $v\in V(G)$ of weight zero  and  valency one
is a {\it leaf-vertex}, and the edge $e\in E(G)$  adjacent to $v$ is a {\it leaf-edge}.
Note that a leaf-edge is a bridge.

 Let $\sigma:G\to \ov{G}=G/e$ be the contraction of a leaf-edge.
 By Proposition~\ref{s*} 
 the map $\sigma_*:\Div (G)\to \Div(\ooG)$
induces an isomorphism 
 $$
 \sigma_*:\Pic(G)\stackrel{\cong}{\la} \Pic (\ooG)
 $$
(abusing notation).  Let $X\in \MaG$, then the component $C_v$ corresponding to the leaf-vertex $v$ is a smooth rational curve attached   at a unique node;
such components are called {\it rational tails}.
Now, we have a natural surjection
$$
\MaG \la M^{\operatorname{alg}}(\ov{G});\quad\quad X\mapsto \ov{X}
$$
where  $\ooX$ is  obtained from $X$ by removing  $C_v$.
Here is a picture,  useful also for    Lemma~\ref{tail}.
 \begin{figure}[h]
\begin{equation*}
\xymatrix@=.5pc{
&&&\ar@{-}@/_2pc/[rrrrrrr]_>{Z}&&&&&&&&&&&&\ar@{-}@/_2pc/[rrrrrrr]&&&&&&&&&\\
X=& \ar@{-} @/_.1pc/[rrrrr]^<{C_v} &&&&&&&&&&&&&\ov{X}=&
&&&&&&&&\\
&&&&&&\ar@{-}@/^1.6pc/[rrr] &&
&&&&&&&&&&\ar@{-}@/^1.6pc/[rrr]&&&\\}
\end{equation*}
\end{figure}
\begin{lemma}
\label{tail}
Let $G$ be a graph and $\sigma:G\to \ov{G}=G/e$ the contraction of a leaf-edge.
For every $\delta\in \Pic(G)$ and every $X\in \MaG$ we have, with the above notation,
$$
r(X,\delta)=r(\ooX, \sigma_*(\delta)).
$$
In particular, Identity   \eqref{thalg} holds for $G$ if and only if it holds for $\ooG$.
\end{lemma}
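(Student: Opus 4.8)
The plan is to reduce the computation of $\rdel=\min_{\md\in\delta}\RX$ on $X$ to the analogous minimum on $\ooX$ by isolating the contribution of the rational tail $C_v\cong\PP^1$, where $v$ is the leaf-vertex and $e$ the leaf-edge. Write $X=C_v\cup\ooX$; both subcurves are connected and meet in the single node $p$, lying on the component $C_w$ of $\ooX$ ($w$ being the neighbour of $v$), and I write $\oow=\sigma(v)=\sigma(w)\in V(\ooG)$. Because $|C_v\cap\ooX|=1$, for every $\md$ the restriction map identifies $\Pic^{\md}(X)$ with $\Pic^{a}(\PP^1)\times\Pic^{\ov\mu}(\ooX)$, where $a:=\md(v)$ and $\ov\mu$ is the multidegree of $L_{\ooX}$ on $\ooG$. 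Since $\Pic^{a}(\PP^1)=\{\O(a)\}$ is a single point, $L_{\ooX}$ ranges freely over $\Pic^{\ov\mu}(\ooX)$. Throughout, for a vertex $u$ and $c\in\Z$ I write $\md-c\,u$ for the divisor obtained by lowering the coefficient of $\md$ at $u$ by $c$ ($u$ also denoting the corresponding degree-one divisor); setting $\me:=\sigma_*(\md)\in\sigma_*(\delta)$ one has $\ov\mu=\me-a\,\oow$.

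First I would record the pointwise formulas, obtained from Remark~\ref{rkin} applied with $Z=C_v$, $Y=\ooX$, $k=1$. If $a\geq 0$, then $L_{C_v}=\O(a)$ is base-point-free on $\PP^1$, so the equality case of Remark~\ref{rkin}(2) cannot occur and the lower bound of Remark~\ref{rkin}(1) forces $r(X,L)=r(\ooX,L_{\ooX})+a$; maximizing over $L$ gives $\RX=a+\rmax(\ooX,\me-a\,\oow)$. If $a<0$, then $\mdeg L_{C_v}<0$ and Remark~\ref{rkin}(3) gives $r(X,L)=r(\ooX,L_{\ooX}(-p))$; as tensoring by $\O(-p)$ is a bijection $\Pic^{\ov\mu}(\ooX)\to\Pic^{\ov\mu-\oow}(\ooX)$, the specific point $p$ disappears after maximization and $\RX=\rmax(\ooX,\me-(a+1)\,\oow)$.

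Next I would perform the minimization over $\delta$. Since $e$ is a bridge with $\sigma_*(\mt_v)=0$, the representatives $\md\in\delta$ are parametrized by a choice of $\me\in\sigma_*(\delta)$ (fixing $\md$ up to the kernel $\Z\mt_v$ of $\sigma_*$) together with the free integer $a$. Fixing $\me$ and varying $a$, I claim $\min_a\RX=\rmax(\ooX,\me)$, attained at $a=0$ (equivalently $a=-1$). This rests on the monotonicity of $\rmax(\ooX,-)$ under a unit change of degree at the single vertex $\oow$: tensoring an optimal bundle by $\O(\pm p)$ yields $\rmax(\ooX,\me-\oow)\geq\rmax(\ooX,\me)-1$ and $\rmax(\ooX,\me+\oow)\geq\rmax(\ooX,\me)$. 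Iterating these in the two formulas above gives $\RX\geq\rmax(\ooX,\me)$ for every $a$, with equality at the boundary. Taking the outer minimum over $\me\in\sigma_*(\delta)$ then gives
$$
\rdel=\min_{\me\in\sigma_*(\delta)}\rmax(\ooX,\me)=r(\ooX,\sigma_*(\delta)),
$$
which is the asserted identity. For the final clause: by Proposition~\ref{s*}(2) the bridge contraction induces a rank-preserving isomorphism, so $r_{G}(\delta)=r_{\ooG}(\sigma_*(\delta))$; and since $X\mapsto\ooX$ is surjective onto $M^{\operatorname{alg}}(\ooG)$, taking $\max_{X\in\MaG}$ of the identity just proved yields $\rgdel=\ralg(\ooG,\sigma_*(\delta))$. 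Hence Identity~\eqref{thalg} holds for $G$ if and only if it holds for $\ooG$.

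I expect the main obstacle to be the minimization step: showing cleanly that the rational tail contributes nothing to the minimum over $\delta$. This requires the monotonicity of $\rmax(\ooX,-)$ in the degree at $\oow$ and, more delicately, that the $a\geq 0$ and $a<0$ regimes match at their common boundary so that the minimum is genuinely $\rmax(\ooX,\me)$. The case analysis of Remark~\ref{rkin} must be applied with care so that the base-point-freeness of $\O(a)$ on $\PP^1$ is used correctly and so that the node $p$ drops out only after maximizing over line bundles of fixed multidegree.
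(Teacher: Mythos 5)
Your proposal is correct and follows essentially the same route as the paper: decompose $X=C_v\cup\ooX$ at the separating node, use Remark~\ref{rkin} (split by the sign of the degree $a$ on the tail, twisting by the node point $p$), show the minimum over representatives is achieved when the tail has degree $0$ (the paper normalizes via $\md^0=\md+\md(v)\mt_v$ and proves $\rmax(X,\md)\geq\rmax(X,\md^0)$, which is your monotonicity statement in different bookkeeping), and finish with Proposition~\ref{s*}(2) plus surjectivity of $X\mapsto\ooX$. The only cosmetic difference is that you push everything to $\ooX$ immediately with exact formulas for $\rmax$, whereas the paper compares line bundles pairwise on $X$ via the bijection $L\mapsto L^0=(\O_{C_v},L_{\ooX}(ap))$; the substance of the estimates is identical.
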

\begin{proof}
Let $v\in V(G)$ be the leaf-vertex of $e$ and  $C=C_v\subset X$ the corresponding rational tail; we write  $X=C\cup Z$ with $Z\cong \ooX$, and   identify $Z=\ooX$ from now on.
Pick $\md\in \delta$ and set $c=\md(v)$;
we define
$$
\md^0:=\md +c\mt_v
$$
where $\mt_v\in \Prin(G)$ 
was  defined in \eqref{mt}. Hence    $\md^0(v)=0$ and $\md^0\sim \md$. Notice that $\sigma_*(\md)=\sigma_*(\md^0)$. Now, 
since  $C\cap Z$ is a separating node of $X$, 
there is a canonical isomorphism $\Pic X\cong \Pic (C)\times \Pic (Z)$
mapping $L$ to 
the pair of its restrictions, $(L_C,L_Z)$.
Hence 
% as $\Pic^0(C)$ is a point,
we have an isomorphism
$$
\Pic^{\md^0}(X)\stackrel{\cong}{\la} \Pic^{\sigma_*(\md^0)}(\ooX);
%= \Pic^{\sigma_*(\md^0)}(Z);
\quad\quad\quad L\mapsto \ov{L}:=L_{Z},
$$
 as for any $L\in \Pic^{\md^0}(X)$ we have $L_{C}=\O_C$. Moreover, we have
$$
r(X,L)=r(Z, L_{Z})=r(\ooX, \ov{L}) 
$$
 by Remark~\ref{rkin}. Therefore
\begin{equation}
\label{rmax}
\rmax(X,\md^0)=\rmax(\ooX, \sigma_*(\md^0)).
\end{equation}
Now we claim that for every $\md\in \delta$ we have
\begin{equation}
\label{clr}
\rmax(X,\md)\geq \rmax(X,\md^0).
\end{equation}
This claim implies our statement. In fact
it implies that $r(X,\delta)$ can be computed by looking only at representatives taking value $0$ on $C$,
i.e.
$$
r(X,\delta)=\min\{\rmax(X, \md^0), \  \forall \md^0\in \delta\};
$$
now by \eqref{rmax} and the fact that $\sigma_*:\Div(X)\to \Div(\ooX)$ is onto we get
 $$
r(X,\delta)=\min\{\rmax(\ooX, \ov{\md}), \  \forall  \ov{\md}\in \sigma_*(\delta)\}=r(\ooX,  \sigma_*(\delta)) 
$$
and we are done. 

We now prove \eqref{clr}.
By what we said before,
line bundles on $X$ can be written as pairs
$(L_C,L_Z)$.
Pick $L\in \Pic^{\md}(X)$ and set $L^0:=(\O_C, L_Z(cp))$
where $p=C\cap Z\in Z$ and $c=\deg_CL$ as before.
Hence $L^0\in \Pic^{\md^0}(X)$ and this sets up a bijection
$$
\Pic^{\md}(X)\la \Pic^{\md^0}(X);\quad \quad \quad L\mapsto L^0.
$$
We shall prove $r(X,L)\geq r(X,L^0)$ for every $L\in \Pic^{\md}(X)$, which clearly implies \eqref{clr}.
If $c\geq 0$ we have
$$
r(X,L)\geq r(C,\O(c))+r(Z,L_Z)=c+r(Z,L_Z)
$$
and
$$
r(X,L^0)=r(Z,L_Z(cp))\leq c+r(Z,L_Z); 
$$
combining the two inequalities we are done. If $c<0$ we have
$$
r(X,L)=r(Z,L_Z(-p))\geq r(Z,L_Z(-|c|p))=r(X,L^0).
$$
The proof is finished.
\end{proof}
Let $G$ have  genus $g\geq 2$ and let $\ooG$ be obtained after  all possible leaf-edges contractions; then $\ooG$ is a semistable graph.
By the previous result  we can assume all graphs and curves of genus $\geq 2$ semistable.

\begin{cor}
\label{cor0}
 Conjecture~\ref{toy2}  holds if $g=0$.
\end{cor}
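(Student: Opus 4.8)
The plan is to prove Corollary~\ref{cor0} by reducing to the simplest possible graph via Lemma~\ref{tail} and then invoking Lemma~\ref{zero}. First I would observe that a connected graph $G$ of genus $0$ satisfies $b_1(G)=0$ and $\omega\equiv 0$; that is, $G$ is a weightless tree. Indeed, genus $0$ forces $\sum_{v}\omega(v)=0$ and $b_1(G)=0$, so $G$ has no loops, no positive weights, and $|E(G)|=|V(G)|-1$. Every leaf of such a tree is a weight-zero valency-one vertex, hence a leaf-vertex in the terminology just introduced, and its unique adjacent edge is a leaf-edge.

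The key step is an inductive contraction of leaf-edges. Since a tree with at least two vertices always has at least one leaf, I would repeatedly apply the contraction $\sigma:G\to\ooG=G/e$ of a leaf-edge. By Lemma~\ref{tail}, Identity~\eqref{thalg} holds for $G$ if and only if it holds for $\ooG$; moreover $\ooG$ is again a genus-$0$ weightless tree with one fewer vertex. Iterating, after $|V(G)|-1$ contractions I arrive at the graph $G_0$ with a single vertex $v_0$, weight $0$, and no edges. Thus it suffices to verify \eqref{thalg} for $G_0$.

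For the base graph $G_0$ the verification is immediate. Its Picard group $\Pic(G_0)$ is trivial, so the only class is $\delta=0$ (every divisor of degree $d$ on $G_0$ is just $d\,v_0$, and there is a unique class in each degree). By Lemma~\ref{zero}, Conjecture~\ref{toy2} holds for $\delta=0$ on any graph, and in particular $r_{G_0}(\delta)=\ralg(G_0,\delta)=0$ when $d=0$. For a general degree $d$ on $G_0$, the curves in $M^{\mathrm{alg}}(G_0)$ are the smooth rational curves $\PP^1$, for which $r(\PP^1,\O(d))=\max\{-1,d\}$ agrees with $r_{G_0}(d\,v_0)=\max\{-1,d\}$ by the discussion of the graph $G_g$ in the introduction (here $g=0$). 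Hence \eqref{thalg} holds for $G_0$ in every degree.

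The only subtlety, and the point I would be most careful about, is making sure Lemma~\ref{tail} is applied with the correct accounting of degrees across the contraction: $\sigma_*$ is an isomorphism $\Pic(G)\xrightarrow{\cong}\Pic(\ooG)$ preserving degree, so the class $\delta$ and its rank transport correctly, and the identity \eqref{thalg} is genuinely equivalent on the two sides. Beyond this bookkeeping the argument is entirely formal, since no Clifford- or Riemann-type estimate is needed in genus $0$; the whole content is the reduction to a point followed by the trivial base case. Therefore Conjecture~\ref{toy2} holds for every graph of genus $0$.
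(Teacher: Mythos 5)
Your proof is correct and follows essentially the same route as the paper: reduce to the one-vertex, weight-zero, edgeless graph via leaf-edge contractions and Lemma~\ref{tail}, then verify $r_G(\delta)=\max\{-1,d\}=r(\PP^1,\O(d))$ directly. Your explicit induction on the tree structure and the appeal to Lemma~\ref{zero} are harmless elaborations of what the paper leaves implicit.
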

\begin{proof}
By Lemma~\ref{tail} we can assume $G$ has one  vertex (of weight zero) and no edges, so that
the only curve in $\MaG$ is $\PP^1$.
Now every $\delta\in \Pic^d(G)$,
has a unique representative  and
$r_G(\delta)=\max\{-1,d\}$. On the other hand  
$\Pic^d(\PP^1)=\{\O(d)\}$  and
$r(\PP^1, \O(d))=\max\{-1,d\}$.  \end{proof}
Another consequence of Lemma~\ref{tail} is the following.
\begin{prop}
\label{cor1}
 Conjecture~\ref{toy2}  holds if $g=1$.
\end{prop}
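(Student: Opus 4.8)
The plan is to reduce to semistable graphs via Lemma~\ref{tail}, classify the semistable graphs of genus $1$, and then treat each by a direct cohomological computation, exploiting that every curve of arithmetic genus $1$ has trivial dualizing sheaf. First I would invoke Lemma~\ref{tail} to assume $G$ semistable. A connected weighted graph of genus $1$ that is semistable is either the one-vertex graph $G_1$ of weight $1$ (no edges), or a cyclic graph $C_n$ with $n\geq 1$ weight-zero vertices (the case $n=1$ being a single loop): if $\sum_v\omega(v)=1$ then $b_1=0$, so $G$ is a tree, which must be $G_1$ since any other tree has a weight-zero leaf; and if $\sum_v\omega(v)=0$ then $b_1=1$ and semistability forces $G$ to be a pure cycle. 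For $G=G_1$ the statement is exactly the toy case \eqref{toy}: here $\MaG$ is the family of smooth genus-$1$ curves, each class in $\Pic^d(G_1)$ has a unique representative, and both sides equal $\max\{-1,d-1\}$ (the value being $0$ when $d=0$).

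It remains to treat $G=C_n$, so that $X\in\MaG$ is a cycle of $n$ smooth rational curves (for $n=1$, an irreducible nodal curve) of arithmetic genus $1$. By the Riemann-Roch formula for graphs, using $g=1$ and $\deg\mk_{C_n}=0$, one finds $r_{C_n}(\delta)=d-1$ for $d\geq 1$, $\;r_{C_n}(0)=0$, and $r_{C_n}(\delta)=-1$ for $\delta$ of degree $0$ with $\delta\neq 0$ (Remark~\ref{st}); for $d<0$ the rank is $-1$. For the lower bound $\rgdel\geq r_{C_n}(\delta)$, note that \eqref{RRin} gives $\RX\geq d-1$ for every $\md$ of degree $d$ and every $X$, whence $\rdel\geq d-1$; the case $\delta=0$ is Lemma~\ref{zero}.

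For the matching upper bound I would use that the dualizing sheaf satisfies $K_X\cong\O_X$ (its multidegree is $\mo$ and $h^0(K_X)=1$), so that $h^1(X,L)=h^0(X,L^{-1})$ for every $L$. When $d\geq 1$, since $r_{C_n}(\delta)=d-1\geq 0$ there is an effective representative $\md\geq 0$ in $\delta$; for this multidegree $L^{-1}$ has multidegree $-\md\leq 0$ with total degree $-d<0$, so any section of $L^{-1}$ vanishes on the components of positive degree and, being constant on the degree-zero components, propagates to $0$ around the connected cycle. Hence $h^0(X,L^{-1})=0$ and $r(X,L)=d-1$ for every $L\in\Pic^{\md}(X)$, giving $\RX=d-1$ and thus $\rdel\leq d-1$. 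When $d\leq -1$, the class $-\delta$ is effective (its rank is $-d-1\geq 0$), so $\delta$ has a representative $\md\leq 0$; by the same propagation $h^0(X,L)=0$ for all $L\in\Pic^{\md}(X)$, so $\RX=-1=\rdel$. Finally, when $\delta$ has degree $0$ and $\delta\neq 0$, I would take a representative of the shape $\md=v_j-v_0$: indeed the relations $v_{k+1}+v_{k-1}-2v_k=\mt_{v_k}\in\Prin(C_n)$ give $[v_j-v_0]=j[v_1-v_0]$, and $[v_1-v_0]$ has order $n$ in $\Pic^0(C_n)\cong\Z/n\Z$, so every nonzero class is realized. For this multidegree the single $-1$ forces vanishing on $C_{v_0}$, the zero propagates in both directions around the cycle, and the unique degree-$1$ component $C_{v_j}$ receives two vanishing conditions; hence $h^0(X,L)=0$ for all $L$, giving $\RX=-1=\rdel$. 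In every case $\rdel$ equals the asserted value for all $X$, so $\rgdel=r_{C_n}(\delta)$ and \eqref{thalg} holds.

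The main obstacle is the upper bound: one must produce, in each linear equivalence class, a single multidegree on which the algebraic rank is forced down to the combinatorial value for \emph{every} curve $X$ and \emph{every} line bundle of that multidegree. The propagation-of-zeros argument achieves this, but it depends on choosing the representative correctly ($\md\geq 0$ for $d\geq 1$, $\md\leq 0$ for $d\leq -1$, and $v_j-v_0$ for nontrivial degree-zero classes) so that some component kills a section and connectivity of the cycle spreads the vanishing; the triviality $K_X\cong\O_X$ is what makes $h^1$ computable and keeps the bound sharp.
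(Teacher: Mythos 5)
Your proposal is correct, and its skeleton is the paper's: reduce by Lemma~\ref{tail} to either the weight-one vertex or the cycle, compute $r_G$ via Riemann--Roch for graphs, get the lower bound from \eqref{RRin} (and Lemma~\ref{zero} for $\delta=0$), and then force the matching upper bound by producing in each class one multidegree on which \emph{every} line bundle has the combinatorial rank. Where you genuinely differ is in how classes are covered and how the rank is pinned down. The paper lists $\gamma$ explicit representatives $(d,\mo_{\gamma-1}),(d-1,1,\mo_{\gamma-2}),\ldots,(d-1,\mo_{\gamma-2},1)$, shows they are pairwise inequivalent by Lemma~\ref{kZ} (so by the complexity count they exhaust $\Pic^d(G)$), and then checks the rank on each one directly, citing \cite[Lm. 2.5]{Ccliff} as an alternative. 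You instead cover the classes intrinsically --- an effective representative for $d\geq 1$ (existence from $r_G(\delta)\geq 0$), an anti-effective one for $d\leq -1$, and $v_j-v_0$ for nonzero degree-zero classes via the cyclic structure of $\Pic^0(C_n)$ --- and you verify the rank by Serre duality with $K_X\cong\O_X$ plus propagation of zeros around the cycle. This makes the argument self-contained (no appeal to \cite{Ccliff}, and no need for Lemma~\ref{kZ} or the enumeration of classes when $d\neq 0$), at the modest cost of justifying $K_X\cong\O_X$; note that for $d=0$ your representatives $v_j-v_0$ coincide with the paper's, and the paper's parenthetical remark that a nonzero section of $\O_{\PP^1}(1)$ cannot vanish twice is exactly the endgame of your propagation. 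One caution: your opening claim that \emph{every} curve of arithmetic genus $1$ has trivial dualizing sheaf is false as stated (a rational tail attached to an elliptic curve gives $\mdeg K_X=(1,-1)$); it holds precisely for the curves surviving the reduction --- smooth genus-one curves and cycles of rational curves --- which is the only place you use it, so the proof stands.
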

\begin{proof}
By Riemann-Roch we have, for every $\delta\in \Pic^d(G)$
 \begin{displaymath}
r_G(\delta)=\ \left\{ \begin{array}{ll}
d-1 \  &\text{ if } d\geq 1 \\
\  \  0 \  &\text{ if } \delta=0\\
-1  & \text{ otherwise. }\\
\end{array}\right.
\end{displaymath}
By Lemma~\ref{tail} we can assume $G$ has no leaves.
If $G$ consists of a vertex of weight $1$ 
then a curve  $X\in \MaG$ is   smooth of genus $1$, and the result follows from Riemann-Roch.

So we can assume $G$ is a cycle with $\gamma$ vertices, all 2-valent
of weight zero, and $\gamma$ edges.
Now, we   have $|\Pic^d(G)|=\gamma$ (as the complexity of $G$ is obviously $\gamma$).
Let us exhibit the elements of $\Pic^d(G)$ by suitable representatives:
$$
\Pic^d(G)=\{[(d, \mo_{\gamma-1})],[(d-1, 1,\mo_{\gamma-2})],\ldots, [(d-1, \mo_{\gamma-2},1)]\}
$$
where we write $\mo_i=(0,\ldots, 0)\in \Z^i$.
We need to show  the above $\gamma$ multidegrees are not equivalent to one another;
indeed the difference of any two of them is of type $\pm(\mo_i,1,\mo_j,-1,\mo_k)$
which has rank $-1$ (by Lemma~\ref{kZ} for example).

Pick  now $X\in \MaG$.
Assume $d\geq 1$.
By Riemann-Roch $r(X,L)\geq d-1$ for any line bundle $L$ of degree $d$, so it suffices to show that 
every $\delta\in \Pic^d(G)$ has a representative $\md$ such that for some $L\in \Pic^{\md}(X)$
equality holds.
Let $\md$ be any of the above representatives and
pick $L\in \Pic^{\md}(X).$ It is   easy to check directly that $r(X,L)=d-1$
(or,  one can apply \cite[Lm. 2.5]{Ccliff}), 
so we are done.

Suppose $d\leq 0$; by Lemma~\ref{zero} we can assume $\delta\neq 0$.
Let $\md$ again be any of the  above  representatives.
One easily see that $r(X,L)=-1$ for every $L\in \Pic^{\md}(X)$
(as a nonzero section of $\O_{\PP^1}(1)$ cannot have two zeroes).
Hence $\rdel =-1=r_G(\delta)$ for every $X\in \MaG$. The result is proved.
  \end{proof}

The proof of the next proposition contains some computations that could be avoided using later
results. Nevertheless we shall  give the direct proof, which   explicitly illustrates
   previous  and later topics.
\begin{prop}
\label{g2thm}
 Conjecture~\ref{toy2}  holds for stable graphs of genus $2$.
\end{prop}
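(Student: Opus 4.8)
The plan is to enumerate the stable graphs of genus $2$, reduce to the finitely many combinatorial classes $\delta$ in each relevant degree range, and match the combinatorial rank $r_G(\delta)$ against the algebro-geometric rank $\rgdel$ curve by curve. By the remark following Conjecture~\ref{toy2} and by Lemma~\ref{tail}, we may assume $G$ is semistable with no leaf-edges, and in fact stable; the stable graphs of genus $2$ are few. There are essentially three of them: the graph $G_2$ with a single vertex of weight $2$ and no edges (whose $\MaG$ consists of smooth genus-$2$ curves, settled by \eqref{toy}); the graph with two weight-$1$ vertices joined by one edge (a genus-$1$ piece on each side meeting at a node); the graph with one weight-$1$ vertex carrying a loop; the ``theta'' graph with two weight-zero vertices joined by three edges; and the ``dumbbell'' graph of two loops joined by a bridge. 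Because loops and weights do not affect $\Prin(G)$ or $\Pic(G)$ (the Remark after \eqref{canGX}), and because bridge-contraction preserves rank by Proposition~\ref{s*}\eqref{bridge}, several of these reduce to one another or to lower-genus computations already handled.

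\textbf{Key steps in order.} First I would pin down $\Pic^d(G)$ for each graph: its cardinality equals the complexity $c(G)$, and I would list explicit representatives $\md$ for each class, as was done for the cycle in Proposition~\ref{cor1}. Second, for $d\geq 2g-2=2$ the inequality \eqref{RRin} together with Riemann--Roch (Theorem, Riemann--Roch for graphs) pins $r_G(\delta)=d-1$ for $d\geq 3$ and controls the boundary case $d=2g-2=2$ via Remark~\ref{st}, so the main content sits in the range $0\leq d\leq 2$. Third, for each class $\delta$ in this range and each $X\in\MaG$ I would compute $\RX$ for representatives $\md\in\delta$, using Remark~\ref{rkin} to split the rank across the nodes and the components (exactly as in Example~\ref{binex} and Lemma~\ref{tail}), and then minimize over $\md\in\delta$ to get $\rdel$, and maximize over $X$ to get $\rgdel$. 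Fourth, I would verify $\rgdel=r_G(\delta)$ class by class; the combinatorial side is computed either directly from \eqref{trasl} or via Lemma~\ref{kZ}, which is the efficient tool for showing $r_G(\md)\leq r-1$.

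\textbf{Where the work concentrates.} The genuinely delicate cases are the theta graph and the genus-$2$ analogue of the binary analysis, in degrees $d=1$ and $d=2$, where Clifford's bound $r_G(\delta)\leq d/2$ (Proposition~\ref{rdiff}\eqref{rcl}) is tight and one must exhibit a single curve $X$ achieving the combinatorial rank. Here the pattern of Example~\ref{binex} is the model: the maximal algebraic rank is attained precisely on ``special'' curves, and for non-special $X$ one gets a strictly smaller $\RX$, so the maximum over $\MaG$ (not the value at a generic $X$) is what matches $r_G(\delta)$. The main obstacle will be showing, for the degree-$2$ classes on the theta and dumbbell graphs, that some $X$ realizes equality while confirming via Lemma~\ref{kZ} or Remark~\ref{st} that the combinatorial rank is not larger; the upper bound $\rgdel\leq r_G(\delta)$ comes essentially for free from the Specialization Lemma philosophy, but the reverse inequality $\rgdel\geq r_G(\delta)$ requires the explicit special-curve construction.

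\textbf{Assembling the cases.} Finally I would tabulate the outcome across all stable genus-$2$ graphs and degrees, noting that the graphs with weights or loops collapse to the weightless models $\lm$ and to already-settled lower-genus computations by Lemma~\ref{tail} and Proposition~\ref{s*}, so that only the weightless graphs (theta and dumbbell, plus $G_2$) carry new content. Comparing the computed $\rgdel$ with $r_G(\delta)$ in every case establishes \eqref{thalg} for all stable graphs of genus $2$, proving the proposition.
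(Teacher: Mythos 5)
Your overall architecture---enumerate the stable genus-$2$ graphs and verify \eqref{thalg} class by class, using Lemma~\ref{kZ} and Remark~\ref{st} on the combinatorial side and Remark~\ref{rkin} plus the pattern of Example~\ref{binex} on the algebraic side---is the same as the paper's. The genuine gap is the reduction you use to dispose of most of the cases: you claim that since loops and weights do not affect $\Prin(G)$ or $\Pic(G)$, and since bridge contraction preserves the combinatorial rank (Proposition~\ref{s*}), the graphs carrying weights or loops ``collapse to the weightless models $\lm$'' or to already-settled computations, so that only the theta graph, the dumbbell and $G_2$ need work. These invariance statements are purely combinatorial; they do not reduce Conjecture~\ref{toy2} for $G$ to Conjecture~\ref{toy2} for $\lm$ or for $G/e$, because the right-hand side of \eqref{thalg} is a maximum over $X\in \MaG$, and $\MaG$ changes completely under these operations. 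For instance, for $G$ with two weight-$1$ vertices and one edge, $\MaG$ consists of unions of two genus-$1$ curves meeting at a point, whereas $M^{\rm alg}(\lm)$ parametrizes configurations of rational curves that are not in $\MaG$ at all; knowing \eqref{thalg} for $\lm$ says nothing about ranks of line bundles on curves in $\MaG$. The only contraction compatible with the algebraic side is the leaf-edge contraction of Lemma~\ref{tail}, where the paper proves the curve-by-curve equality $r(X,\delta)=r(\ooX,\sigma_*(\delta))$; a stable graph has no leaf-edges, so this gives nothing here. As a result your proposal leaves unproved exactly the cases where the paper does most of its work: the graph with two weight-$1$ vertices and one edge (the paper's Case 1, an explicit table of algebraic ranks on unions of two genus-$1$ curves for every class of every degree $d\leq 2$), and the one-vertex graphs with weight and/or loops (the paper's Corollary~\ref{corir}, which uses Clifford's theorem for irreducible curves together with the existence in $\MaG$ of a curve carrying a line bundle of rank $\lfloor d/2\rfloor$).

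Two further points. First, the inequality $\rgdel\leq r_G(\delta)$ does not ``come essentially for free from the Specialization Lemma philosophy'': the Specialization Lemma bounds ranks on the nearby \emph{smooth} fibers, not on $X$ itself, and Proposition~\ref{rdiff} shows that ranks of line bundles on $X$ with multidegree in a fixed class are unbounded. The upper bound is obtained precisely by the table work you place on the other side of the ledger: for each class one must exhibit a representative $\md\in\delta$ (such as $(0,d)$ for $d<0$, or $(2,-1)$ and $(3,-1)$ on the theta graph) and prove via Remark~\ref{rkin} that every $L\in\Pic^{\md}(X)$ satisfies $r(X,L)\leq r_G(\delta)$; this is where the minimum over representatives in the definition of $r(X,\delta)$ earns its keep. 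Second, two slips: for $g=2$ and $d\geq 3$ Riemann--Roch gives $r_G(\delta)=d-2$, not $d-1$; and your list of stable genus-$2$ graphs omits the graph with one weight-$0$ vertex and two loops, as well as the graph with weights $(1,0)$, one edge and a loop at the weight-$0$ vertex (on the positive side, your decision to treat the dumbbell by direct computation rather than by contracting its bridge is forced, for exactly the reason explained above, and its rank computations run parallel to the weight-$(1,1)$ case since irreducible curves of arithmetic genus $1$ behave like elliptic curves for these purposes).
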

\begin{proof}
Let $G$ be a stable graph of genus $2$ and $\delta\in \Pic^d(G)$.
In some cases $r_G(\delta)$ is independent of $G$; namely
if $d<0$ then $r_G(\delta)=-1$, and if $d\geq 3$ then $r_G(\delta)=d-2$
 by
  \cite[Thm 3.6]{AC}. For the remaining cases we need to know $G$.
As  $G$ is stable, it has at most two vertices;
the case $|V(G)|=1$ is treated just  as for higher genus, so we postpone it
to Corollary~\ref{corir}.
If $|V(G)|=2$
there are only two possibilities,
  which we shall treat separately. 
  We shall use   Remark~\ref{rkin} several times without mentioning it.

\

\noindent {\bf Case 1.}
 $G$ has only one edge   and 
both vertices of  weight 1.
Below we have a picture of $G$  together with its weightless model $G^{\bullet}$,
and  with a useful  contraction   of $G^{\bullet}$:
 \begin{figure}[h]
\begin{equation*}
\xymatrix@=.5pc{
G = &*{\bullet}\ar@{-}[rr]_<{+1}_>{+1}
&&*{\bullet}&&&
G^{\bullet} = &*{\circ}\ar@{-} @/^.5pc/[rr]\ar@{-} @/_.5pc/[rr] &&*{\circ}\ar@{-}[rr]^{e}
&&*{\circ}\ar@{-} @/^.5pc/[rr]\ar@{-} @/_.5pc/[rr] &&*{\circ}
&&&
G^{\bullet}/e = &*{\circ}\ar@{-} @/^.5pc/[rr]\ar@{-} @/_.5pc/[rr] &&*{\circ}\ar@{-} @/^.5pc/[rr]\ar@{-} @/_.5pc/[rr] &&*{\circ}
}
\end{equation*}
\end{figure}
 
Clearly, we can identify
 $\Pic(G)= \Z$. Next
 denoting by $e$ the bridge of $\G^{\bullet}$,  by Proposition~\ref{s*} we have a rank preserving
isomorphism
$$
\Pic(G^{\bullet})\cong \Pic(G^{\bullet}/e).
$$
 %and  $\Pic^0(G^{\bullet})\cong   \Z/2\Z\oplus \Z/2\Z.$ 
 Finally, since  there is an injection $\Pic(G)\ha \Pic(G^{\bullet})$ we also have
$$
\Pic(G)\ha   \Pic(G^{\bullet}/e); \quad \quad [(d_1,d_2)]\mapsto [(0, d_1+d_2, 0)]
$$
where we ordered the vertices  from left to right  using the picture.

For any $X\in \MaG$,
we have $X=Z\cup Y$ with $Z$ and $Y$ smooth of genus 1, intersecting at one point. 

 If $d<0$ we pick  the representative
 $(0,d)\in \delta$. Then $\rmax(X,(0,d))=-1$,  hence $r(X,\delta)=-1$
and we are done. 
If  $d\geq 3$ 
 we pick $(d_1,d_2)\in \delta$ with $d_1\geq 1$ and $d_2\geq 2$ so that
  $$
  \rmax(X,(d_1,d_2))=d_1-1+d_2-1=d-2=r_G(\delta);
$$
by \eqref{RRin} we are done.
The case $\delta=0$ is   in \ref{zero}.
The remaining two cases, $d=1,2$ are done in the second and third column of the table below.
The combinatorial rank is computed on $G^{\bullet}/e$.
 For the algebraic computations we used  also the symmetry of the situation.
The two consecutive rows starting with $r_G(\md)$ and $\rmax(X,\md)$
prove that $r(X,\delta) \leq r_G(\delta)$; the last  row shows that equality holds.

\begin{center}
\begin{tabular}{|c||c|c|}
\hline
&& \\
 $[\md]\in \Pic(G)$        & $[(0,1)]$   & $[(0,2)]$\\
&& \\
\hline
%&& \\
 $[\md^{\bullet}]\in \Pic(G^{\bullet}/e)$       & $[(0,1,0)]$   & $[(0,2,0)]$\\
%&& \\
\hline
&& \\
 $r_G(\md)=$      & $0$  & $1$\\
&& \\
\hline
&& \\
 $\rmax(X,\md)=$       & $0$  & $1$\\
&   & \\
\hline
$\md'\sim \md $    & $(a,1-a)$  & $(a,2-a)$\\
\hline
& &\\
 $\rmax(X,\md')=$       & $\left\{ \begin{array}{ll}
a-1\geq 1 & \text{   } a\geq 2\\
 -a \geq 1 &\text{   } a\leq -1\\
\end{array}\right.$
 & $\left\{ \begin{array}{ll}
a-1\geq 2 &\text{   } a\geq 3\\
1 &\text{   } a=1\\
1-a\geq 2 &\text{   } a\leq -1\\
\end{array}\right.$\\
&  &\\
\hline
\end{tabular}
\end{center}

Case 1 is finished.  

\

\noindent {\bf Case 2.} $G$ is a binary graph, as in Example~\ref{binex}, with 3 edges.
We have $\Pic^0(G)\cong \Z/3\Z$.
If $d<0$ or $d\geq 3$ we know $r_G(\delta)$; for the remaining cases
we listed
 the rank of each class in the table below, 
with a choice of representatives making the computations  trivial (by Lemma~\ref{kZ}). \begin{center}
\begin{tabular}{|c||c|c|c|}
\hline
&&&\\
 $d=0$        & $r_G(0,0)=0$   & $r_G(1,-1)=-1$& $r_G(2,-2)=-1$\\
 &&&\\
 \hline
 &&&\\
 $d=1$        & $r_G(0,1)=0$   & $r_G(1,0)=0$& $r_G(2,-1)=-1$\\
 &&&\\
 \hline
  &&&\\
 $d=2$        & $r_G(0,2)=0$   & $r_G(1,1)=1$& $r_G(2,0)=0$\\
 &&&\\
 \hline
\end{tabular}
\end{center}
Let now $X\in \MaG$; we already described such curves in Example~\ref{binex},
where we proved the result for   $\delta=[(1,1)]$,  which we can thus skip, as well as   $\delta=[(0,0)]$.
We   follow   the rows of the table.
If $d=0$ and $a=1,2$ we have for any $L\in \Pic^{(a,-a)}(X)$,
\begin{equation}
\label{0-1}
r(X,L)=r(\PP^1,\O(a-3))=-1=r_G(a,-a).
\end{equation}
The case $d=0$ is done. Next, 
$\rmax(X,(0,1))\leq 0$, and it is clear if $L=\O(p)$, with $p$ nonsingular point of $X$,
we have $r(X,L)=0$; hence 
 $\rmax(X,(0,1))=0.
$ 
For the other multidegrees in $[(0,1)]$ we have
$$
r(X,(3a,1-3a))=\left\{ \begin{array}{ll}
r(\PP^1,\O(3a-3)=3a-3\geq 0 &\text{ if } a\geq 1\\
r(\PP^1,\O(-3a-2)=-3a-2\geq 1 &\text{ if } a\leq -1.\\
\end{array}\right.$$
So $r(X,[(0,1)])=0=r_G([(0,1)]).$
As for the last class of degree 1, 
for every $X$ and $L\in \Pic^{(2,-1)}(X)$ we have
$$
r(X,L)=r(\PP^{1}, \O(-1))=-1=r_G(2,-1)
$$
hence this case is done.

We are left with $\delta=[(0,2)]$; we claim  $r(X,\delta)=0$ for every $X$.
By Riemann-Roch $r(X,L)\geq 0$ for any $L\in \Pic^2(X)$, so we need to prove
that for some $\md\in \delta$ equality holds for every $L\in \Pic^{\md}(X)$;
choose $\md=(3,-1)$, then
$ 
r(X,L)=r(\PP^1, \O(3-3))=0
$ 
as claimed.

To finish the proof notice that
 $r(X,\delta)=-1$ if $d<0$ (easily done arguing as for \eqref{0-1}). Finally, we claim
  $r(X,\delta)=d-2$ if $d\geq 3$. For this we pick for $\delta$ a representative
$(d_1,d_2)$ with $d_1\geq 0$ and $d_2\geq 3$; then one checks easily
that $\rmax(X,(d_1,d_2))=d-2$; by \eqref{RRin} we are done.
\end{proof}

\subsection{High degree divisors and irreducible curves}
Recall that we
  can assume all graphs and curves semistable of genus at least $2$.
The following theorem states that
 if  $d\geq  2g-2$  then   Identity   \eqref{thalg}  is true  in a   stronger form.
 First  we need the following.
\begin{defi}
Let $G$ be a semistable graph of genus $g\geq 2$, and let $\md\in \Div^dG$.
We say that $\md$ is  
  {\em{semibalanced}}   if  
for every   $Z\subset V(G)$ the following 
inequality holds
\begin{equation}
\label{BIeq}
 %\frac{d\mk_G(Z)}{2g-2}-\frac{Z\cdot Z^c}{2}\leq \md(Z)  
\md(Z) \geq   \mk_G(Z)d/(2g-2)-(Z\cdot Z^c)/2 
\end{equation}
 and if for every vertex $v$ of weight zero and valency 2  we have $\md(v)\geq 0$.

We say that $\md$ is {\em{balanced}} 
 if it is semibalanced and if for every  vertex $v$ of weight zero and valency 2    we have $\md(v)= 1$.
 \end{defi}
 The reason for introducing  this  technical definition (the graph theoretic analogue of \cite[Def. 4.6]{Cner})
is that for line bundles of semibalanced multidegree
 we have extensions of Riemann's, and partially Clifford's, theorem, as we shall see in the proof of the next theorem.

\begin{thm}
\label{Riemann}
Let $G$ be a semistable graph of genus $g$ and assume $d\geq 2g-2$. Then for every
$\delta\in \Pic^d(G)$  the following facts hold.
\begin{enumerate}
\item
\label{Rie1}
 Conjecture~\ref{toy2}  holds.
\item
\label{Rie2}
There exists $\md\in \delta$ such that  $\rmax(X,\md)=r_G(\md)$ for
  every $X\in \MaG$.
\item
\label{Rie3} Every semibalanced $\md\in \delta $
satisfies part \eqref{Rie2}.
  \end{enumerate}
\end{thm}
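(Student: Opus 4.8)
The plan is to prove the three assertions in the order (3) $\Rightarrow$ (2) $\Rightarrow$ (1), since (3) is the sharpest statement and the other two follow with little extra work. The combinatorial rank is known a priori: applying the Riemann--Roch formula for graphs to $\mk_{G}-\md$ (whose degree is $2g-2-d\le 0$) together with Remark~\ref{st}, one gets $r_{G}(\delta)=d-g$ whenever $d\ge 2g-1$, while for $d=2g-2$ one has $r_{G}(\delta)=g-1$ if $\delta=[\mk_{G}]$ and $r_{G}(\delta)=g-2$ otherwise. Thus the whole content of part (3) is to show that a semibalanced representative computes this number, i.e. $\rmax(X,\md)=r_{G}(\md)$ for every $X\in\MaG$. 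Since it is a combinatorial fact (to be quoted from the balanced-multidegree theory, cf. \cite{Cner}) that every $\delta\in\Pic^{d}(G)$ with $d\ge 2g-2$ contains a semibalanced representative, part (2) is an immediate consequence of part (3).

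The heart of the argument is an extension of Riemann's vanishing theorem to semibalanced multidegrees, which I would prove as follows. Fix $\md$ semibalanced of degree $d$ and $L\in\Pic^{\md}(X)$; I want $h^{1}(X,L)=0$, equivalently $h^{0}(X,K_{X}\otimes L^{-1})=0$ when $d\ge 2g-1$. Suppose a nonzero section $s$ of $M:=K_{X}\otimes L^{-1}$ exists, and let $Z\subseteq V(G)$ be the set of vertices on whose components $s$ is not identically zero; writing $Y=\cup_{v\in Z}C_v$, the section vanishes at every node of $Y\cap Y^{c}$, so on each component it has at least that many zeros, giving the \emph{support inequality} $\deg_{Y}M\ge (Z\cdot Z^{c})$, that is $\md(Z)\le \mk_{G}(Z)-(Z\cdot Z^{c})$. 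Comparing this with the semibalanced inequality \eqref{BIeq} for the same $Z$ yields
$$
\mk_{G}(Z)\,\frac{d-(2g-2)}{2g-2}\ \le\ -\frac{(Z\cdot Z^{c})}{2}.
$$
The right-hand side is negative because $X$ is connected and $Y$ is a proper nonempty subcurve, so $(Z\cdot Z^{c})\ge 1$. The key geometric input is that, $G$ being semistable, every proper subcurve satisfies $\mk_{G}(Z)\ge 0$: indeed $\mk_{G}(Z)=\sum_i(2p_a(Y_i)-2+(Y_i\cdot Y_i^c))$ over the connected components $Y_i$, and semistability (no weight-zero valency-one vertices) forbids a connected genus-zero subcurve meeting its complement in a single node. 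Hence the left-hand side is $\ge 0$ for $d\ge 2g-1$, a contradiction; this proves $h^{1}(X,L)=0$ and therefore $r(X,L)=d-g$ for every such $L$, so $\rmax(X,\md)=d-g=r_{G}(\md)$.

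The boundary degree $d=2g-2$ is where a (partial) Clifford statement replaces Riemann's, and where I expect the main difficulty. Here the same support inequality combined with \eqref{BIeq} forces $(Z\cdot Z^{c})\le 0$, hence $Y=X$; for $\md\not\sim\mk_{G}$ this contradicts the existence of a section of $K_{X}\otimes L^{-1}$ (a nowhere-vanishing section would trivialize it, forcing $\md=\mk_{G}$), giving $\rmax(X,\md)=g-2$. For $\delta=[\mk_{G}]$ one must know that $\mk_{G}$ is the unique semibalanced representative of its class --- a combinatorial fact I would isolate as a lemma --- so that the relevant $\md$ is $\mk_{G}$ itself; then a line bundle of multidegree $\mk_{G}-\mk_{G}=\mo$ on a connected nodal curve has $h^{0}\le 1$, whence $r(X,L)\le g-1$ with equality for $L=K_{X}$, i.e. $\rmax(X,\mk_{G})=g-1$.

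Finally, part (1) follows from parts (2)--(3). The upper bound $\rgdel\le r_{G}(\delta)$ is immediate from $\rdel\le\rmax(X,\md)=r_{G}(\md)$ for the semibalanced $\md$. For the lower bound, Riemann--Roch on $X$ gives $r(X,L)\ge d-g$ for every $L$, hence $\rdel\ge d-g$, which settles all cases except $\delta=[\mk_{G}]$; this last case I would handle by a semicontinuity argument, observing that for each representative $\md'\sim\mk_{G}$ the set $\Pic^{\md'}(X)$ contains a twister-translate of $K_X$ which is the specialization of the canonical bundle on a nearby smooth fiber, so its rank $g-1$ bounds $\rmax(X,\md')$ from below by upper-semicontinuity. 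The step I expect to be most delicate is precisely the interplay, at $d=2g-2$, between the uniqueness of the canonical semibalanced multidegree and the Clifford bound, since outside the canonical class the vanishing must be exact while on it the rank must jump by one.
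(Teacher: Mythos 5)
Your proposal is correct, and it reaches the theorem by a genuinely more self-contained route than the paper. Where you prove the two key vanishing statements directly --- $h^{1}(X,L)=0$ for semibalanced $\md$ with $d\geq 2g-1$, and $\rmax(X,\md)\leq g-1$ with equality only for $\md=\mk_G$ at $d=2g-2$ --- the paper simply quotes them from \cite{Ccliff}: Riemann's theorem for balanced multidegrees \cite[Thm 2.3]{Ccliff} (remarking that its proof never uses the condition $\md(v)=1$ at weight-zero bivalent vertices, so it holds for semibalanced ones), and the Clifford-type statement \cite[Thm 4.4]{Ccliff}, whose hypothesis \eqref{44} it verifies by exactly the computation you would need. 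Your ``support inequality plus \eqref{BIeq} plus $\mk_G(Z)\geq 0$ from semistability'' argument is in substance a proof of those quoted results, so nothing is lost; what it buys is independence from \cite{Ccliff}, at the cost of redoing that work. Your treatment of part (1) is in fact more careful than the paper's: the paper asserts that part (2) ``obviously implies'' part (1), which is true via \eqref{RRin} in every case except $\delta=[\mk_G]$, $d=2g-2$, where one still needs $\rmax(X,\md')\geq g-1$ for \emph{every} representative $\md'\sim\mk_G$; your twister/semicontinuity argument (the translate $K_X\otimes T$ is a limit of canonical bundles on nearby smooth fibers, exactly as in Lemma~\ref{zero}) is what fills this in.

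The one step you flag but do not prove --- that $\mk_G$ is the \emph{unique} semibalanced representative of its class --- is indeed indispensable: without it, part (3) as stated would fail for a hypothetical semibalanced $\md\sim\mk_G$ with $\md\neq\mk_G$, since your argument (and the paper's) gives $\rmax(X,\md)=g-2<g-1=r_G(\md)$ for such an $\md$. The paper's proof has the same hidden dependence and never mentions it, so you are ahead on this point, but the lemma should be recorded; it follows in two lines from Remark~\ref{cnerlm}. Indeed, suppose $\md=\mk_G+\div(f)$ with $\div(f)\neq 0$, and let $Z\subsetneq V(G)$ be the set of vertices where $f$ attains its minimum; then
\begin{equation*}
\md(Z)\leq \mk_G(Z)-(Z\cdot Z^c),
\end{equation*}
while \eqref{BIeq} at $d=2g-2$ gives $\md(Z)\geq \mk_G(Z)-(Z\cdot Z^c)/2$; comparing the two forces $(Z\cdot Z^c)\leq 0$, contradicting the connectedness of $G$. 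With this lemma inserted (and the equally easy observation, which you use implicitly, that the subcurve $Y$ in your vanishing argument must be proper when $d\geq 2g-1$), your proof is complete.
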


 \begin{proof}
We have that  
every $\delta\in \Pic G$ admits a semibalanced representative
(see \cite[Prop. 4.12]{Cner}). Therefore  \eqref{Rie3} implies \eqref{Rie2}, which   obviously implies \eqref{Rie1}.  We shall now prove  \eqref{Rie3}.
 
If $d\geq 2g-1$, by \cite[Thm 3.6]{AC} we have $r_G(\delta)=d-g$.

On the other hand, by the Riemann-Roch theorem for curves, we have  $r(X,L)\geq d-g$ for every line bundle $L$ of degree $d$. 

Now, by the extension of Riemann's theorem to singular curves \cite[Thm 2.3]{Ccliff}, for every  balanced  representative $\md\in \delta$,
and  for every $L\in \Pic^{\md}$, we have  
\begin{equation}
\label{riembal}
r(X,L)=d-g.
\end{equation}
Hence if $\md$ is balanced we are done.
  It remains to show that the theorem we just used extends to semibalanced multidegrees. A balanced multidegree $\md$ is defined as a semibalanced one,
satisfying the extra condition $\md(v)=1$ for any vertex $v$ of weight zero and valency $2$.
Now it is simple to check that the proof of that theorem  
never uses the extra condition,
hence \eqref{riembal} holds also for any $L$ of semibalanced multidegree.
 This completes the proof in case $d\geq 2g-1$.
 
 Now assume $d=2g-2$. By Remark~\ref{st} we have $r_G(\delta)\leq g-1$ with equality if and only if $\delta$ is the canonical class. Let $\md\in \delta$ be semibalanced.
 By \cite[Thm 4.4]{Ccliff} (an extension of Clifford's theorem), if $\md$ is such that for every subcurve  $Z\subsetneq X$
 of arithmetic genus $g_Z$ 
 we have the following inequality 
 \begin{equation}
\label{44}\md(Z)\geq 2g_{Z}-1, 
\end{equation} 
then we have 
$ \rmax(X,\md)\leq g-1$
with equality if and only if $\md=\mdeg K_X$;
 as
 $\mdeg K_X=\mk_G$  we will be done  if  \eqref{44}  holds for every
subcurve  $Z$. 

To prove that, we abuse notation writing $Z\subset V(G)$ for
the set of vertices  corresponding to the components of $Z$.
 As $\md$ is semibalanced  we have  
 $$
 \md(Z)\geq \mk_G(Z)-(Z\cdot Z^c)/2=2g_{Z}-2+(Z\cdot Z^c)-(Z\cdot Z^c)/2
 $$
as by  \eqref{canGX} we have
 $\mk_G(Z)=\deg_{Z}K_X=2g_{Z}-2+(Z\cdot Z^c)$.
 Therefore 
 $$
 \md(Z)\geq 2g_{Z}-2+(Z\cdot Z^c)/2\geq  2g_{Z}-3/2,
 $$
(as $(Z\cdot Z^c)\geq 1$) which implies $\md(Z)\geq 2g_{Z}-1$. So \eqref{44} holds and we are done.
\end{proof}

\begin{cor}
 Conjecture~\ref{toy2}  holds if $d\leq 0$. 
 
 To prove Conjecture~\ref{toy2} in all remaining cases it suffices to prove it for  $d\leq  g-1$.
\end{cor}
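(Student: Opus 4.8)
The plan is to establish a Riemann--Roch duality for the algebro-geometric rank $\ralg(G,\delta)$ that exactly mirrors the combinatorial one, and then to read off both assertions as immediate formal consequences. Throughout I may assume $G$ semistable of genus $g\geq 2$ by Lemma~\ref{tail}, since the cases $g\leq 1$ are already fully settled in Corollary~\ref{cor0} and Proposition~\ref{cor1}.

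The heart of the argument is the identity
$$
\ralg(G,\delta)-\ralg(G,\mk_G-\delta)=d-g+1,
$$
valid for every $\delta\in\Pic^d(G)$. To prove it I would fix $X\in\MaG$ and a representative $\md\in\delta$, and apply the algebraic Riemann--Roch theorem to each $L\in\Pic^{\md}(X)$: since $\mdeg(K_X\otimes L^{-1})=\mk_G-\md$ by \eqref{canGX}, and $L\mapsto K_X\otimes L^{-1}$ is a bijection $\Pic^{\md}(X)\to\Pic^{\mk_G-\md}(X)$, taking the maximum over this fibre gives $\rmax(X,\md)=\rmax(X,\mk_G-\md)+d-g+1$. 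Next, as $\md$ runs over the representatives of $\delta$ the multidegree $\mk_G-\md$ runs over those of $\mk_G-\delta$, so taking the minimum yields $r(X,\delta)=r(X,\mk_G-\delta)+d-g+1$; finally taking the maximum over $X\in\MaG$ produces the displayed identity. Comparing it with the graph Riemann--Roch formula $r_G(\delta)-r_G(\mk_G-\delta)=d-g+1$ shows that the defect $\ralg(G,\delta)-r_G(\delta)$ is unchanged under $\delta\mapsto\mk_G-\delta$; hence \eqref{thalg} holds for $\delta$ if and only if it holds for $\mk_G-\delta$.

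With this duality in hand both claims are short. For the first, if $d\leq 0$ then $\mk_G-\delta$ has degree $2g-2-d\geq 2g-2$, so Theorem~\ref{Riemann}\eqref{Rie1} gives \eqref{thalg} for $\mk_G-\delta$, and duality transfers it to $\delta$; this subsumes the class $\delta=0$ treated in Lemma~\ref{zero}. For the second, suppose \eqref{thalg} is known for every class of degree $\leq g-1$. Then for any $\delta$ of degree $d\geq g-1$ its dual $\mk_G-\delta$ has degree $2g-2-d\leq g-1$, so \eqref{thalg} holds for $\mk_G-\delta$ and, by duality, for $\delta$; together with the hypothesis this covers all degrees (the overlap at $d=g-1$ being harmless).

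I expect the only delicate point to be the bookkeeping in the duality identity, namely verifying that the three successive operations---maximum over $L$ in a fixed multidegree, minimum over the representatives of the class, and maximum over $X\in\MaG$---each commute with the Riemann--Roch shift $+\,(d-g+1)$. This works precisely because each is taken over a set placed in a canonical bijection with its dual ($\Pic^{\md}(X)$ with $\Pic^{\mk_G-\md}(X)$, and the representatives of $\delta$ with those of $\mk_G-\delta$). Once this quantifier interchange is justified, no input beyond Theorem~\ref{Riemann} and the two Riemann--Roch formulas is needed.
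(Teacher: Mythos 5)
Your proposal is correct and follows essentially the same route as the paper: both use the involution $\delta\mapsto\mk_G-\delta$ together with the algebraic and combinatorial Riemann--Roch formulas to show the Conjecture holds for $\delta$ if and only if it holds for $\mk_G-\delta$, then invoke Theorem~\ref{Riemann} when the dual degree is $\geq 2g-2$ and reduce the rest to $d\leq g-1$. The only difference is that you spell out the quantifier bookkeeping (max over $\Pic^{\md}(X)$, min over representatives, max over $X\in\MaG$) that the paper leaves implicit, which is a faithful elaboration rather than a different argument.
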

\begin{proof}
For $\md\in \Div(G)$ set
$ 
\md^*=\mk_G-\md 
$ so that $|\md^*|=2g-2-d$.
Then, by Riemann Roch,
$\rmax(X,\md)=r_G(\md)$ if and only if $\rmax(X,\md^*)=r_G(\md^*)$.
Therefore 
the Conjecture holds for $[\md]$ if and ony if it holds for $[\md^*]$.

If $d\leq 0$ then $|\md^*|\geq 2g-2$ and the Conjecture holds by
Theorem~\ref{Riemann}. If 
$d\geq g$ then $|\md^*|\leq g-2$, so we reduced to   the required range.
\end{proof}

\begin{cor}
\label{corir}
 Conjecture~\ref{toy2}  holds if $|V(G)|=1$, i.e. if $\MaG$ parametrizes irreducible curves.
\end{cor}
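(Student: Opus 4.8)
We need to prove Conjecture~\ref{toy2} when $|V(G)|=1$, so $G$ has a single vertex $v$ of weight $\omega(v)=w$ with $b:=b_1(G)$ loops, and $g=w+b$. Every $X\in\MaG$ is then an irreducible curve of arithmetic genus $g$ whose normalization has geometric genus $w$ and which has $b$ nodes. Since $|V(G)|=1$, every divisor class $\delta\in\Pic^d(G)$ has a \emph{unique} representative $\md=(d)$, so $r(X,\delta)=\RX=r(X,(d))$; there is no minimum over $\delta$ to take. The identity \eqref{thalg} thus reduces to computing $\max_{X}\RX$ and comparing it to the combinatorial rank $r_G(d)$, which by \eqref{rankwl} is computed on the weightless loopless model $\lm$.

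**The plan:** By the preceding corollary we may assume $0\le d\le g-1$; I would treat the high-degree and low-degree ends via Riemann-Roch duality exactly as in that corollary, leaving the middle range as the genuine content. The main task is to produce, for each $d$ in range, a curve $X\in\MaG$ and a line bundle $L\in\Pic^d(X)$ realizing $r(X,L)=r_G(d)$, while showing no $X$ and $L$ can do better. For the upper bound $\RX\le r_G(d)$ I expect to invoke the Specialization Lemma discussed in the excerpt: any $L\in\Pic^d(X)$ is the restriction of a line bundle $\L$ on the total space of a regular smoothing $\phi$, and specialization gives $r(X,L)\le r_G(\mdeg\L_{|X})=r_G(d)$ since the single-vertex graph forces every multidegree into the unique class $\delta$. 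For the matching lower bound I would choose $X$ as favorably as possible—an irreducible curve degenerating from a general smooth genus-$g$ curve—and use the semicontinuity of rank together with Brill-Noether existence: a general smooth curve carries a $\mathfrak{g}^r_d$ of the maximal Brill-Noether rank, and this rank should agree with $r_G(d)$ precisely because the single-vertex graph $G_g$ is the dual graph of \emph{smooth} curves (as in the introductory discussion around \eqref{toy}).

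**The key comparison:** The crux is that $r_{G_w}(d)$—the rank on the weight-$w$ loopless vertex—already equals the maximal dimension of a $\mathfrak{g}$ of degree $d$ on a smooth genus-$g$ curve, by the toy identity \eqref{toy} and Clifford/Riemann as recalled in the introduction. The loops of $G$ only enlarge $\Pic^0(\lm)$ and the combinatorial rank is computed on $\lm$, so I would verify that inserting loops (equivalently, adding nodes to an otherwise smooth curve) does not change the answer: subdividing loops and applying \eqref{rankwl} should reduce $r_G(d)$ to the rank on a graph whose algebraic counterpart is a nodal \emph{irreducible} curve, and semicontinuity then squeezes $\max_X\RX$ between $r_{G_g}(d)$ (achieved at the smooth limit) and the specialization upper bound $r_G(d)$.

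**The main obstacle:** The hardest step will be the exact identification $r_G(d)=r_{G_g}(d)$ for the \emph{looped} single-vertex graph in the intermediate range $0\le d\le g-1$, where neither Riemann nor the clean Clifford extremal analysis applies directly. I would handle it by an explicit computation on $\lm$: since $\lm$ is obtained from the weight-$w$ vertex by attaching $b$ subdivided loops, I expect $r_{\lm}(\iota(d))$ to be governed by a reduction-to-smooth argument (using Lemma~\ref{kZ} or the hyperelliptic-type structure of subdivided banana pieces), yielding the same $\lfloor d/2\rfloor$-type or Brill-Noether value as for $G_g$. Once that combinatorial identity is established, the algebraic side follows formally: specialization gives $\le$, and a dimension count via the Brill-Noether number on the smooth generic fiber together with upper-semicontinuity gives $\ge$, completing the proof.
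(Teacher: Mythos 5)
Your proposal contains two genuine errors, one at the heart of each of the two inequalities you need. For the upper bound $\RX\leq r_G(d)$ you invoke the Specialization Lemma, but that lemma points the wrong way for this purpose: it bounds the rank on the \emph{nearby smooth fibers}, $r(\phi^{-1}(b),\L_{|\phi^{-1}(b)})\leq r_G(\mdeg\, \L_{|X})$ for $b\neq b_0$, and says nothing about the rank of $\L_{|X}$ on the special fiber itself. Since upper-semicontinuity gives $r(X,\L_{|X})\geq r(\phi^{-1}(b),\L_{|\phi^{-1}(b)})$, the special-fiber rank can only jump up relative to the nearby fibers, so no inequality $r(X,L)\leq r_G(d)$ can be extracted from specialization. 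The paper obtains this bound from a completely different and classical source: Clifford's theorem, which holds for irreducible (possibly nodal) curves, gives $r(X,L)\leq \lfloor d/2\rfloor$ for every $L\in\Pic^d(X)$, and $\lfloor d/2\rfloor$ equals $r_G(d)$ in the relevant range $1\leq d\leq g-1$ by \cite[Lemma 3.7]{AC} --- a combinatorial fact you correctly single out as your ``main obstacle'' but do not prove, and which the paper simply cites.

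For the lower bound your plan is to degenerate from a Brill--Noether general smooth curve and use semicontinuity, identifying the general curve's maximal rank with $r_G(d)$. That identification is false: on a Brill--Noether general curve of genus $g$ the existence of a divisor of degree $d$ and rank $r$ is governed by $\rho=g-(r+1)(g-d+r)\geq 0$, which in the range $1\leq d\leq g-1$ gives a rank strictly smaller than $\lfloor d/2\rfloor$ in general (for $g=4$, $d=2$, a general curve carries no degree-$2$ divisor of rank $1$, yet $r_G(2)=1$). The curves attaining the extremal value in \eqref{toy} are the hyperelliptic ones, not the general ones, and that is exactly what the paper uses on the algebraic side: $\MaG$ contains an irreducible nodal \emph{hyperelliptic} curve $X$, which carries $L\in\Pic^d(X)$ with $r(X,L)=\lfloor d/2\rfloor$; for $d=1$ one simply takes $L=\O_X(p)$ with $p$ a smooth point. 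Your setup paragraph (uniqueness of the representative of $\delta$, hence $r(X,\delta)=\RX$) and the reduction to $d\leq g-1$ via Riemann--Roch duality are correct, but both of the substantive inequalities rest on mechanisms that fail.
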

\begin{proof}
The graph $G$ consists of a vertex $v$ of weight $h$ and $g-h$ loops
attached to $v$, with $0\leq h\leq g$; recall that we can assume $g\geq 2$.
Let $\delta =[d]\in \Pic G$;   we can assume $1\leq d\leq g-1$.
 By \cite[Lemma 3.7]{AC} we have
 $ 
 r_G(d) =  \left\lfloor{\frac{d}{2}}\right\rfloor.
 $ 

Let now $X\in \MaG$; as $X$ is irreducible   
Clifford's theorem  holds, hence
$r(X,L)\leq \left\lfloor{\frac{d}{2}}\right\rfloor$ for every $L\in \Pic^d(X)$.
We must prove there exists $X\in \MaG$ admitting  $L\in \Pic^d(X)$ for which equality holds.
If $d=1$ we take $L=\O_X(p)$ with  $p$   nonsingular point of $X$; then $r(X, \O_X(p))=0$. We are left with the case  $g\geq 3$;
 it is well known that
  $\MaG$ contains a hyperelliptic curve, $X$, and that
 %by  \cite[Lm 5.2.3]{Ctheta}, 
there exists
   $L\in \Pic^d(X)$ for which $r(X,L)=\left\lfloor{\frac{d}{2}}\right\rfloor$. So we are done.
\end{proof}
For convenience, we collect together all the cases treated in the paper.
\begin{summ}
Let $G$ be a  (finite, connectected, weighted) graph of genus $g$ and let $\delta\in \Pic^d(G)$. Then Conjecture~\ref{toy2} holds  in the following cases.
\begin{enumerate}
\item $g\leq 1$.
\item $d\leq 0$ and $d\geq 2g-2$.
\item
$|V(G)|=1.$
\item
$G$ is a stable graph of genus 2.
\end{enumerate}
\end{summ}


\begin{thebibliography}{EGKH02}
  \bibitem{AB}
 Amini, O.; Baker,  M.:
 {\it  Linear series on metrized complexes of algebraic curves.}
Preprint. Available at arXiv: 1204.3508.  
   \bibitem{AC}%[AC11]
 Amini, O.; Caporaso,  L.:
 {\it Riemann-Roch theory for weighted graphs and tropical curves.}
Preprint. Available at arXiv:1112.5134.
% \bibitem[AC81]{ACo}
%Arbarello, E.; Cornalba, M.:
%{\it Su una congettura di Petri. } Comment. Math. Helvetici (1981) 1--38.

%\bibitem[A]{amini} Amini, O.: {\it Reduced divisors and embeddings of  tropical curves.} Preprint Math arXiv:1007.5364v1

\bibitem
%[ACGH]
{ACGH} 
Arbarello, E.; Cornalba, M.; Griffiths, P. A.; Harris, J.: 
{\it Geometry of algebraic curves. Vol. I.} Grundlehren der Mathematischen
Wissenschaften 267.
Springer-Verlag, New York, 1985. 
\bibitem
%[GAC]
{gac} Arbarello, E.;   Cornalba, M.;  Griffiths, P. A.:
\emph{ Geometry of algebraic curves. Vol. II.} With a contribution by  Harris J. D.
Grundlehren der Mathematischen Wissenschaften 268,
Springer-Verlag (2011).
 \bibitem
 %[BdlHN97]
 {BdlHN} Bacher, R.; de la Harpe, P.; Nagnibeda, T.:
 {\it The lattice of integral flows and the lattice of integral cuts
 on a finite graph.}  Bull. Soc. Math. France   125  (1997), 167--198.


\bibitem
%[BN07]
{BNRR}Baker, M.; Norine, S.: {\it Riemann-Roch and Abel-Jacobi theory on a finite graph.} Adv. Math.  215  (2007),  no. 2, 766--788.

 \bibitem%[BN09]
 {BNhyp}Baker, M.; Norine, S.: {\it Harmonic morphisms and hyperelliptic graphs } Int. Math. Res. Not.  (2009),  no. 15, 2914--2955.

  \bibitem  {bakersp} Baker, M.:
 {\it Specialization of linear systems from curves to graphs.} Algebra and Number Theory 2 (2008), no. 6, 613--653.




%\bibitem{BMV} Brannetti, S.; Melo, M.; Viviani, F.: \emph{ On the tropical  Torelli map.} 
%Adv. in Math.  
%226 (2011), 2546--2586. 
% \bibitem [BLR]{BLR}  Bosch, S.;  L\"utkebohmert, W.;  Raynaud M.:
% \newblock N\'eron models. \newblock Ergebnisse der Mathematik 21 Springer 1990.
 % \bibitem
 %[C07]
 % {Ctheta} Caporaso L.:
 %{\it Geometry of the theta divisor of a compactified jacobian.}
 %Journ. of the Europ. Math. Soc. Vol. 11,  2009 pp. 1385-1427.
% \bibitem[C08]
%{Cbin}Caporaso,  L.:
%\newblock {\it Brill-Noether theory of binary curves.}
%\newblock
% Math. Res. Let.  17 (2010) no.2, 243-262.  Available at  arXiv: 0807.1484.
 


 \bibitem
 %[C08] 
 {Cner}
 Caporaso,  L.:  {\it N\'eron models and compactified Picard schemes over the moduli stack of stable curves.}
 Amer. Journ. of Math.  Vol 130 (2008) 1-47.   
 
 \bibitem
 %[C08] 
{Ccliff} 
 Caporaso, L.:  \emph{ Linear series on semistable curves.}  Int. Math. Res. Not.  (2011), no. 13, 2921--2969.
 %-  Available at Math arXiv:0812.1682.

% \bibitem[C11]{Chbk} Caporaso, L.: \emph{Algebraic  and tropical curves: comparing their moduli spaces.}
 %To appear in the Volume:  Handbook of Moduli. Edited by G. Farkas and I. Morrison. Available at Math arXiv: 1101.4821

%\bibitem{CV} Caporaso, L.; Viviani, F.: \emph{ Torelli theorem
%for graphs and tropical curves.}  Duke Math. Journ.Vol. 153. No1 (2010) 129-171.  Available at arXiv:0901.1389.


%\bibitem[C10]{link} Caporaso, L.: {\it Geometry of tropical moduli spaces and linkage of graphs .}
%Available at ArXiv: 1001.2815.
\bibitem{CDPR} Cools, F.;  Draisma, J.;   Payne, S.;  Robeva, E.:\emph{ A tropical proof of the Brill-Noether Theorem.} Adv. Math. 230 (2012), 759--776.


\bibitem
%[DM69]
{DM}Deligne, P.; Mumford, D.:
 \newblock \emph{ The irreducibility of the space of curves of given genus.}
 \newblock Inst. Hautes \'Etudes Sci. Publ. Math. No 36 (1969) 75--120.

\bibitem{Die}
Diestel, R.: \emph{ Graph theory.} Graduate Text in Math. 173, Springer-Verlag, Berlin, 1997.
 
\bibitem%[HMo]
{HMo}
Harris, J.; Morrison, I.: \emph{Moduli of curves.} Graduate Texts in Mathematics, 187. Springer-Verlag, New
York, 1998.
 
%\bibitem{neron}A. N\'eron:
%{\it Mod\`eles minimaux des vari\'et\'es ab\'eliennes sur les corps locaux et
%globaux.}  Inst. Hautes \'Etudes Sci. Publ.Math. No. 21 1964 .

\bibitem{OS}
 Oda, T.;  Seshadri, C.S.: {\it Compactifications of the generalized Jacobian variety.}
Trans. A.M.S. 253 (1979) 1-90.

\bibitem{raynaud}
 Raynaud,  M.: {\it Sp\'ecialisation du foncteur de Picard.}  Inst. Hautes \'Etudes
Sci. Publ. Math. No. 38 1970 27--76.


\end{thebibliography}
\end{document}